\documentclass{amsart}
\usepackage{amsmath}
\usepackage{bbm}
\usepackage{paralist}
\usepackage{graphics} 
\usepackage{graphicx} 
\usepackage{color}
\usepackage[pagewise]{lineno}
\usepackage[colorlinks=true]{hyperref}
\hypersetup{urlcolor=blue, citecolor=red}
\newtheorem{theorem}{Theorem}[section]

\newtheorem{lemma}[theorem]{Lemma}
\newtheorem{proposition}{Proposition}

\newtheorem{remark}{Remark}

\newcommand{\vv}{v}      %growth rate / vector field
\newcommand{\V}{V}		% Lyapunov function
\newcommand{\state}{\mathcal{X}}		% State Space 
\newcommand{\uu}{\mathcal{U}}		 % Markov semigroup
\newcommand{\gen}{\mathcal{L}}  % Inf. generator
\newcommand{\pp}{{\mathbb{P}}}	% Probability
\newcommand{\ee}{{\mathbb{E}}}	% Expectation
\newcommand{\1}{\mathbbm{1}}	% Indicator
\newcommand{\jrt}{\Lambda}   % jump rate
\newcommand{\bbR}{{\mathbb{R}}}
\newcommand{\bbN}{{\mathbb{N}}}
\newcommand{\rmd}{{\mathrm{d}}}
\begin{document}
%%%%%%%%%%%%%%%%%%%%%%%%%%%%%%%%%%%%%%%%%%
\title[Semistochastic Convergence Rates] 
      {Convergence Rates for Semistochastic Processes}
      %%%%%%%%%%%%%%%%%%%%%%%%%%%%%%%%%%%%%%%%%%
\author[James Broda and Alexander Grigo and Nikola P.\ Petrov]{}
\subjclass{Primary: 34F05, 60J25, 92D25; Secondary: 60Gxx, 92Bxx.}
%%%%%%%%%%%%%%%%%%%%%%%%%%%%%%%%%%%%%%%%%%
 \keywords{Semistochastic process, minorization, convergence rate, dynamics of carbon, 
 random catastrophe, ecosystem disturbance} 
%%%%%%%%%%%%%%%%%%%%%%%%%%%%%%%%%%%%%%%%%%
%%%%%%%%%%%%%%%%%%%%%%%%%%%%%%%%%%%%%%%%%%
%%%%%%%%%%%%%%%%%%%%%%%%%%%%%%%%%%%%%%%%%%%
\begin{abstract}
We study processes that consist of deterministic evolution punctuated at random times by disturbances with random severity; we call such processes semistochastic.  Under appropriate assumptions such a process admits a unique stationary distribution.  We develop a technique for establishing bounds on the rate at which the distribution of the random process approaches the stationary distribution. An important example of such a process is the dynamics of the carbon content of a forest whose deterministic growth is interrupted by natural disasters (fires, droughts, insect outbreaks, etc.).
\end{abstract}
%%%%%%%%%%%%%%%%%%%%%%%%%%%%%%%%%%%%%%%%%
%%%%%%%%%%%%%%%%%%%%%%%%%%%%%%%%%%%%%%%%%%
 \email{jbroda@wlu.edu}
 \email{alexander.grigo@ou.edu}
 \email{npetrov@ou.edu}
%%%%%%%%%%%%%%%%%%%%%%%%%%%%%%%%%%%%%%%%%%
%%%%%%%%%%%%%%%%%%%%%%%%%%%%%%%%%%%%%%%%%%
\thanks{J.B.\ and N.P.P.\ were partially supported by NSF grant DMS-0807658.  
A.G.\ was partially supported NSF grant DMS-1413428.  
N.P.P.\ was also generously supported by the Nancy Scofield Hester Presidential Professorship.  
We thank Martin Oberlack for useful suggestions.}
%%%%%%%%%%%%%%%%%%%%%%%%%%%%%%%%%%%%%%%%%%
%%%%%%%%%%%%%%%%%%%%%%%%%%%%%%%%%%%%%%%%%%
\thanks{$^*$ Corresponding author.  Present address: 
Department of Mathematics, Washington and Lee University, Lexington, VA 24450, USA}
%%%%%%%%%%%%%%%%%%%%%%%%%%%%%%%%%%%%%%%%%%
\noindent
{\em This article has been published in a revised form in {\em Discrete and Continuous Dynamical Systems - Series B} [\url{https://doi.org/10.3934/dcdsb.2019001}]. This version is free to download for private research and study only. Not for redistribution, re-sale or use in derivative works.}
\par \medskip
%%%%%%%%%%%%%%%%%%%%%%%%%%%%%%%%%%%%%%%%%%
\maketitle
%%%%%%%%%%%%%%%%%%%%%%%%%%%%%%%%%%%%%%%%%%
\centerline{\scshape James Broda$^*$, Alexander Grigo and Nikola P.\ Petrov}
\medskip
{\footnotesize
%%%%%%%%%%%%%%%%%%%%%%%%%%%%%%%%%%%%%%%%%%
\centerline{ Department of Mathematics}
   \centerline{University of Oklahoma}
   \centerline{Norman, OK, 73019, USA} 
} %%%%%%%%%%%%%%%%%%%%%%%%%%%%%%%%%%%%%%%%%%
\bigskip
%%%%%%%%%%%%%%%%%%%%%%%%%%%%%%%%%%%%%%%%%
\section{Introduction} 
%%%%%%%%%%%%%%%%%%%%%%%%%%%%%%%%%%%%%%%%%%
This line of research began due to a question from an ecologist: 
How should one model the carbon content of an ecosystem that experiences randomly occurring catastrophes of random severity? 
The role of disturbances such as droughts, forest fires, and insect outbreaks on the dynamics of carbon has been discussed in \cite{carbon1}, \cite {carbon2}, \cite{carbon3}, and~\cite{carbon4}.  In the absence of disturbances, the amount of carbon in an ecosystem increases naturally due to photosynthesis and eventually approaches the carrying capacity of the ecosystem.  On occasion, however, an extreme event results in significant destruction of an ecosystem and consequently a drastic reduction in the amount of carbon stored in the ecosystem.  
\par
In order to model the  carbon content of an ecosystem, continuous time continuous state space 
semistochastic processes were studied by Leite, Petrov, and Weng in \cite{np1} and formulae were derived for the densities of the corresponding stationary distributions.  
Semistochastic processes like the one studied in~\cite{np1} are a particular case of the so-called piecewise deterministic Markov processes (PDMPs).  
The dynamics of PDMPs lacks a diffusive component, and has been used in applications 
to growth-fragmentation processes, storage models, exposure to contaminants, 
communication networks, among others 
(for recent results and references see, e.g., \cite{Malrieu2015,BertailCT2010,LaurencotP2009,CalvezDG2012}).  
A general framework for studying PDMPs has been developed by Davis \cite{Davis1984} 
(see also his book \cite{Davis1993}).  
\par
Given that a stationary distribution exists (and can be calculated explicitly as in~\cite{np1}), 
a natural question is: At what rate does the process approach its stationary distribution?
If the time-dependent distributions are absolutely continuous, then one approach to resolving this would be to study the evolution of the corresponding time-dependent densities 
which is governed by an integro-differential PDE.  
An alternative and more general approach is to analyze the time-evolution of the corresponding distributions through their action on observables, which is the picture dual to using the integro-differential PDE.  
In this  paper we adopt the latter approach which works also for distributions that are not absolutely continuous.
\par
In this paper, we utilize purely probabilistic methods to establish explicitly computable bounds on convergence rates; consequently, our methods for determining convergence rates, are quite different from the methods used in \cite{np1} to develop exact formula for the stationary distributions.  The methods we use originated in the study of discrete-time Markov chains and are based on establishing a combination of minorization and drift conditions.  These approaches go back to Doeblin, and appear in various forms in \cite{tweed2,num2,ros1,ros2,RobertsT2000}.  
For an overview of more recent work see, e.g., \cite{TeelSS2014,Malrieu2015}.  
Roughly speaking, minorization conditions are bounds on the probability of transitioning 
in one step from any initial value to some specified region in the state space.  
Drift conditions, on the other hand, need to be applied when the state space is unbounded 
and the stochastic process may drift arbitrarily far away.  
The drift conditions allow us to control the process in some bounded set 
while also keeping track of the probability of the the process drifting out of the set.  
For detailed description of the minorization and drift conditions see Section~\ref{sec:minorization}, 
in particular, equations \eqref{eq:minor-1} and~\eqref{eq:drift-condition}.  
\par 
While the problem of modeling the carbon content of an ecosystem was the original inspiration for this project, our work can be applied to any problem admitting a semistochastic model, i.e., population dynamics, optimal harvesting, 
virus reproduction, and some of the problems mentioned previously in this Introduction.  
\par
What follows is a brief introduction to the concept of a semistochastic process.
By semistochastic process we mean a continuous-time, continuous-state process $\{X_t\}$, with state space $\state$,  which consists of intervals of deterministic evolution punctuated by random events.  The random events we typically consider occur on time-scales much larger than the typical inter-event time, and are modeled as instantanteous events.  These processes are assumed to be doubly-stochastic in the sense that there is a random severity associated to each event as well as the random time at which it occurs.  Consequently, these types of processes are quite different from other types of stochastic processes and can be used to model dynamical systems that lack conservation laws, see \cite{Davis1984, rom}.  Semistochastic processes do share some common features with what are typically referred to as stochastic clearing processes, see \cite{whitt}.  A clearing process, however, consists of epochs of random growth punctuated by instantaneous returns to the initial value once a critical threshhold is reached.  A semistochastic process replaces the random growth in a clearing process with determnistic growth and replaces the deterministic ``clearing" with randomly occurring disturbances.
\par 
The operator-theoretic framework which we set up to study the dynamics of semistochastic processes applies equally well to both scalar- and vector-valued stochastic processes, but we restrict our attention to scalar processes when deriving bounds on convergence rates.  In the scalar case we are thus interested in sample paths that are piecewise continuous, right-continuous, and have left-hand limits almost surely (\textit{c\`{a}dl\`{a}g}).  
We furthermore focus our attention on disturbances that correspond to a diminishing in value.  
The model that one should have in mind is the carbon content in a forest that grows deterministically 
and is interrupted and random times by natural disasters which reduce the amount of carbon.  
We should note that the techniques we use can be adapted to handle more general disturbances as well.  
\par
In the time between two consecutive disturbances, 
$\{X_t\}$ evolves deterministically, 
governed by the autonomous ordinary differential equation 
\begin{equation}  
\frac{\rmd}{\rmd t} x(t) = \vv(x(t)) \ .
\end{equation}
To describe when the disturbances occur, we specify a rate function $\jrt(x)$ which is a measure of the instantaneous rate of occurrence of the disturbances.  We refer to $\jrt(x)$ as the \textit{jump rate} for the process.  
%%%%%%%%%%%%%%%%%%%%%%%%%%%%%%%%%%%%%%%%%%
%%%%%%%%%%%%%%%%%%%%%%%%%%%%%%%%%%%%%%%%%%
\par
Our problem gains another element of randomness from the varying severity of the disturbances.  In order to describe this severity, we introduce random variables $Y_n^-$ and $Y_{n}$ corresponding to the $n^{th}$ pre- and post- disturbance values, respectively.
If the $n^{th}$ disturbance occurs at time $T$, then $Y_n^-$ and $Y_{n}$ are defined via
\[
Y_n^- := \lim_{t \nearrow T} X_t \, , \qquad Y_n := \lim_{t \searrow T} X_t \ .
\] 
In the simplest case, we can then model the severity by stipulating a multiplicative relation between $Y_n^-$ and $Y_{n}$.  An additional random variable, $A_n$ is then defined by setting
\begin{equation*}
Y_n=A_nY_{n}^- \,.
\end{equation*} 
\begin{figure}[ht] 
\centerline{
\includegraphics[scale=.65]{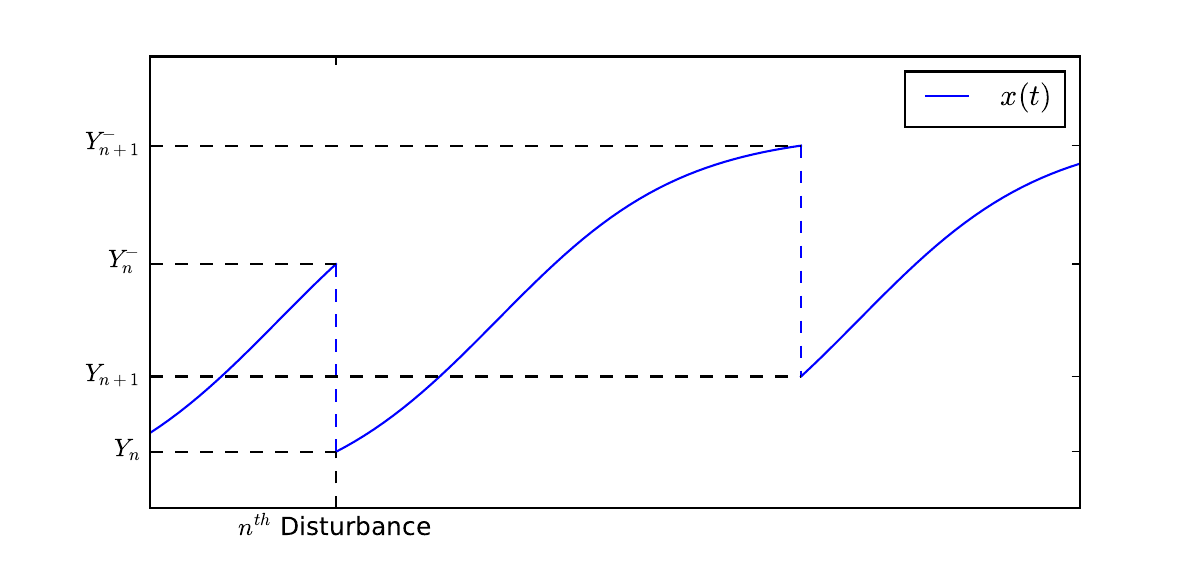}
}
\caption{Schematic for pre- and post- disturbance levels.
}
\label{fig:prepost}
\end{figure}
\par
Having specifed the types of processes we propose to study, we now mention some works that study similar processes, but usually under different assumptions or with different goals.  The most common difference is due to the fact that most of the research on semistochastic processes is concerned with population dynamics, and demographers generally study processes with discrete state-spaces.
\par
An interesting application of semistochastic processes  is proposed by Bartoszy\'{n}ski in \cite{rabies} to model the development of the rabies virus in an infected host.  In this model, the population of the virus naturally decreases exponentially due to the immunological response of the host, but also has random upward jumps due to the viral life cycle.  The state space of the model  Bartoszy\'{n}ski constructs is discrete and the occurrence of jumps is allowed to depend on the current population.
\par
Continuous-time and continuous state space processes subject to random catastrophes are studied by Gripenberg in \cite{grip1}. Gripenberg derives an expression for stationary distributions using a limit theorem from \cite{ath1} based on the concept of Harris recurrence.  There is a connection between the type of recurrence condition that is established in \cite{grip1} and the minorization conditions that we establish, however the issue of convergence rates is not addressed by Gripenberg.  Biedrzycka and Tyran-Kam\'{i}nska \cite{BiedrzyckaTK2016} use operator-theoretic techniques to address the question of existence of invariant densities for similar processes.  
\par
Hanson and  Ryan in \cite{han1} and \cite{han2} examine optimal harvesting problems of populations governed by similar processes with discrete state spaces, though they do not allow for the randomization of the severity of disturbances.  They do, however, consider the possibility of populations experiencing both sudden decreases (jumps down) and sudden increases (jumps up).  With slight modifications, the results we present can also be applied in these situations.  Hanson and Tuckwell also study similar processes with discrete state spaces in \cite{hant1, hant2, hant3}, though their focus is generally on the computation of extinction times.  The problem of determining extinction times in semistochastic models is addressed more recently by Cairns in~\cite{pop3}.
\par
Transient distributions in discrete time discrete state space processes, 
e.g., a birth/immigration-death process with binomial catastrophes, 
have been studied in \cite{EconomouF2008,KapodistriaPR2016}.  
\par
Recent work \cite{BourgeronDE2014,AzaisG2016,AzaisMG2016} develops 
inverse techniques for growth-fragmentation phenomena (like cell division and polymerization) 
that are quite similar to our model.  In particular, these authors propose a method 
for calibrating the jump rate from empirical measurements.  
\par
The plan of our paper is the following: in Section \ref{sec:main} we state our main results on convergence rates, in Section \ref{sec:proofs} we provide proofs of our results by establishing a combination of minorization and drift conditions, we conclude with Section \ref{sec:examples} in which we apply our results to concrete examples.
%%%%%%%%%%%%%%%%%%%%%%%%%%%%%%%%%%%%%%%%%
%%%%%%%%%%%%%%%%%%%%%%%%%%%%%%%%%%%%%%%%%
\section{Statements of the main results\label{sec:main}} 
We start by revisiting the properties of the semistochastic process $\{X_t\}$.
In the time between two consecutive disturbances, 
$X_t$ evolves deterministically, 
governed by the autonomous ordinary differential equation 
\begin{equation}\label{eq:growth}
\frac{\rmd}{\rmd t} x(t)  =  \vv(x(t)) \ .
\end{equation}
We assume throughout that the vector field $v(x)$ admits a unique global solution to~\eqref{eq:growth} 
for any initial value $x(0)$; global Lischitz continuity of $v(x)$ is a sufficient condition.  
%%%%%%%%%%%%%%%%%%%%%%%%%%%%%%%%%%%%%%%%%%
The corresponding flow of \eqref{eq:growth} with initial condition $x(0)=x_0$ is denoted by $\phi^t(x_0)$, and the time duration needed to deterministically evolve from  $x_0$ to $x_1>x_0$ is denoted by 
$\psi(x_0,x_1)$.  Thus, in the absence of disturbances, we have
\begin{equation}  \label{eq:phi-psi}
x_1 = \phi^t(x_0) \quad \Longleftrightarrow \quad t = \psi(x_0,x_1) \ .
\end{equation}
We assume that the occurrences of disturbances have a distribution 
related to a jump rate parameter $\jrt(x)$ given by 
%%%%%%%%%%%%%%%%%%%%%%%%%%%%%%%%%%%%%%%%%%
\[
\pp(\text{disturbance occurs in }(t, t+\Delta t] \, | \, X_t = x)= \jrt(x) \, \Delta t + o (\Delta t) 
\]
%%%%%%%%%%%%%%%%%%%%%%%%%%%%%%%%%%%%%%%%%%
as $\Delta t \to 0$.  
Furthermore, to determine the severity of individual disturbances, we define the multiplier density, 
$\rho(x,\alpha)$, with the property that for any $a \in (0, 1)$
\begin{equation}\label{eq:multiplier-density}
\pp\left(Y_n \leq ax \, | \, Y_n^- =x\right)=\int_0^a \rho(x,\alpha) \, \rmd \alpha \ ,
\end{equation}
where $Y_n$ is the $n^{th}$ post-disturbance random variable and $Y_n^-$ is the $n^{th}$ pre-disturbance random variable.  It will be convenient  to introduce the quantity $\zeta(x)$ to denote the expected fractional loss resulting from a single disturbance,
\begin{equation}\label{eq:zeta}
\zeta(x) 
:=    
\int_0^1 \rho(x,\alpha)\left(1  -  \alpha \right)\, \rmd \alpha   \in (0, 1)  \ .
\end{equation}
Thus larger values of $\zeta(x)$ correspond to an expectation of more severe disturbances and the limiting value $\zeta=0$ would result in purely deterministic growth.
\par 
All of this can be consolidated by specifying the infinitesimal generator 
$\gen$ of~$\{X_t\}$.  
The action of $\gen$ on observables $f$ from the appropriate Banach space is then given by 
\begin{equation}\label{eq:gen}
[\gen f](x)
=
f'(x) \vv(x)  +  \jrt(x)\int_0 ^1  \rho(x,\alpha) \,  [f(\alpha x) \, - \, f(x) ] \, \rmd \alpha \ .
\end{equation}
Corresponding to the generator $\gen$ is a Markov semigroup $\uu^t$ 
which can be specified by its action on observables: 
\[
[\uu^{t}f](x) = \ee[f(X_{t})|X_0=x] \ .
\]
If the distribution of $X_0$ is $\mu_0$, 
then the distribution of $X_t$ is 
\[
\mu_t := \mu_0 \uu^t \ .
\]

In order to quantify the convergence rates, we use the total variation distance~$d_{\mathrm{TV}}$ 
defined for any two distributions, $\mu_1$ and $\mu_2$, by 
\[
d_{\mathrm{TV}}(\mu_1, \mu_2) := \sup_{\, 0 \leq f(x) \leq  1} |\mu_1(f)-\mu_2(f)|.
\]

We are now ready to state our first result.
%%%%%%%%%%%%%%%%%%%%%%%%%%%%%%%%%%%%%%%%%%%%%%%%%%%%%%%%%%%%%%%%%%%%%%%%%%%%%%%%%
\begin{theorem}\label{thm:bounded-state-rate}
Let $\{X_t\}$ be a semistochastic process with generator \eqref{eq:gen} 
on the state space $\state = [0,k]$, satisfying 
%and furthermore satisfying:
\begin{itemize}
\item[(i)] 
$0 <  \lambda_*  \leq  \jrt(x)  \leq  \lambda^{*}  <  \infty$ for all $ x  \in \state $, 
for some constants $ \lambda_*$ and~$\lambda^{*}$, 
\item[(ii)] 
$\rho(x,\alpha)  \geq  \rho_{*} $ for all $x \in \state$ and $ \alpha  \in  [0, 1]$, 
for some constant $\rho_* > 0$, 
\item[(iii)]
the function $\vv$ is non-negative,  
Lipschitz, and $\vv(0) \neq 0$ with $\vv(x)=0$ for at most finitely many $x$.
\end{itemize}
Then $\{X_t\}$ converges exponentially fast to its unique stationary distribution $\pi$. 
Namely, for any time increment $\Delta t > 0$, and any initial distribution $\mu_0$, 
%%%%%%%%%%%%%%%%%%%%%%%%%%%%%%%%%%%%%%%%%%
\begin{equation}  \label{eq:dTV-bounded}
d_{\mathrm{TV}}\left( \mu_t , \pi \right)  
\leq  
\left(1-\epsilon_{\Delta t}\right)^{\lfloor t /\Delta t \rfloor} \ , 
\end{equation}
where 
\begin{equation}  \label{eq:epsilon-delta-t}
\epsilon_{\Delta t} 
:= 
\frac{  \rho_{*}  \Phi  \lambda_*   \exp (-\lambda^{*}  \Delta t) } { k }   \ ,
\end{equation}
\begin{equation}\label{eq:Phi}
\Phi 
:= \int_ 0^{\phi^ {\Delta t} (0)} [\Delta t - \psi(0, z)] \, \rmd z  \ ,
\end{equation}
and $\phi$ and $\psi$ defined in~\eqref{eq:phi-psi}. 
%%%%%%%%%%%%%%%%%%%%%%%%%%%%%%%%%%%%%%%%%%
\end{theorem}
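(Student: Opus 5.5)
The plan is to prove a one-step Doeblin minorization for the skeleton chain $\{X_{n\Delta t}\}_{n\ge 0}$ and then use the standard coupling argument. Concretely, I will show that for every $x\in\state=[0,k]$ and every Borel set $B\subseteq\state$,
\begin{equation*}
\pp\bigl(X_{\Delta t}\in B \,\big|\, X_0=x\bigr) \;\geq\; \epsilon_{\Delta t}\,\nu(B),
\end{equation*}
where $\nu$ is the probability measure on $[0,\phi^{\Delta t}(0)]$ with density proportional to $\Delta t-\psi(0,z)$, normalized by $\Phi$. Given this bound, Doeblin's theorem gives
\begin{equation*}
d_{\mathrm{TV}}(\mu P,\mu' P)\le(1-\epsilon_{\Delta t})\,d_{\mathrm{TV}}(\mu,\mu'),
\end{equation*}
with $P=\uu^{\Delta t}$. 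This contraction yields existence and uniqueness of a stationary $\pi$ for $P$. Since $\uu^s$ commutes with $P$, the measure $\pi\uu^s$ is also $P$-stationary, so $\pi$ is invariant for the whole semigroup. Writing $t=n\Delta t+s$ with $n=\lfloor t/\Delta t\rfloor$, I get
\begin{equation*}
d_{\mathrm{TV}}(\mu_0\uu^t,\pi)=d_{\mathrm{TV}}\bigl((\mu_0\uu^s)P^n,\pi P^n\bigr)\le(1-\epsilon_{\Delta t})^n,
\end{equation*}
because the initial total variation distance is at most $1$. This is exactly \eqref{eq:dTV-bounded}.

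To obtain the minorization, I restrict to the event $E$ that exactly one disturbance occurs in $[0,\Delta t]$, say at time $\tau$, and that it sends the pre-disturbance state to a value $y\le\phi^{-(\Delta t-\tau)}(\cdot)$-compatible region. The process then regrows from $y$ for time $\Delta t-\tau$ with no further jump. Condition (i) controls the time-inhomogeneous jump intensity along the path:
\begin{itemize}
\item the density of the first jump time is at least $\lambda_*e^{-\lambda^*\tau}$;
\item the probability of no further jump on $(\tau,\Delta t]$ is at least $e^{-\lambda^*(\Delta t-\tau)}$.
\end{itemize}
The product is at least $\lambda_*e^{-\lambda^*\Delta t}$, uniformly in $\tau$.

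Next comes the severity. Given the pre-jump state $w=\phi^\tau(x)\le k$, the post-jump value $y=\alpha w$ has density $\rho(w,y/w)/w\ge\rho_*/k$ on $[0,w]$. Since the post-jump state can be any $y\in[0,w]$, I simply use $y$ ranging over values that will be fed into the deterministic flow. I am free to discard mass, so only small $y$ will be used.

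The key computation, and the main obstacle, is the change of variables from the pair $(\tau,y)$ to the final position $z=\phi^{\Delta t-\tau}(y)$. The subtlety is that $w=\phi^\tau(x)$ could be small when $x$ is small; for $x=0$ and small $\tau$, $w$ is tiny. I would handle this by restricting to post-jump values of the form $y\le w$. My first attempt is simpler: restrict to the set where the jump lands essentially at $y\approx 0$. For each final $z\in[0,\phi^{\Delta t}(0)]$ and each $\tau$ with $\Delta t-\tau\ge\psi(0,z)$, there is a unique $y=\phi^{-(\Delta t-\tau)}(z)\in[0,z]$ reaching $z$. Since $\phi^s$ is increasing ($\vv\ge 0$), we have $y\le\phi^{\tau}(0)\le\phi^\tau(x)=w$, so $y$ is admissible. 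Next, $\mathrm{d}z/\mathrm{d}y=\vv(z)/\vv(y)$ is the Jacobian of the flow; I would bound the resulting density from below by $1$, using monotonicity of $\vv$ or, failing that, by working directly with distribution functions. Condition (iii), namely $\vv(0)\neq0$ with finitely many zeros, guarantees that $\phi^{\Delta t}(0)>0$ and that $\psi$ is well defined, so that $\Phi>0$.

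Integrating over $\tau\in[0,\Delta t-\psi(0,z)]$ gives the density bound
\begin{equation*}
\frac{\rho_*\lambda_*e^{-\lambda^*\Delta t}}{k}\,[\Delta t-\psi(0,z)] \quad\text{on } [0,\phi^{\Delta t}(0)],
\end{equation*}
whose total mass is $\epsilon_{\Delta t}$. This is precisely the minorization claimed above. I expect the Jacobian bookkeeping, that is, verifying that the density in $z$ is not smaller than the density in $y$, to require the most care.
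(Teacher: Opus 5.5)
Your overall structure (a one-jump lower bound for $Q=\uu^{\Delta t}$, Doeblin's theorem for the skeleton chain, then $t=n\Delta t+s$ with $\pi\uu^s=\pi$) matches the paper's, and the outer steps are fine. The key computation, however, fails, and the minorization you aim for is false.

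On a one-jump path with jump at $\tau$ and post-jump state $y\ge 0$, the endpoint is $z=\phi^{\Delta t-\tau}(y)\ge\phi^{\Delta t-\tau}(0)$. So a preimage $y=\phi^{-(\Delta t-\tau)}(z)\ge 0$ exists only when $\Delta t-\tau\le\psi(0,z)$. For $\Delta t-\tau>\psi(0,z)$ the preimage is negative, which is the reverse of the inequality you wrote. The admissible jump times are therefore $\tau\in[\Delta t-\psi(0,z),\Delta t]$, of length $\psi(0,z)$, not $\Delta t-\psi(0,z)$. In addition, the Jacobian factor $\rmd y/\rmd z=\vv(y)/\vv(z)$ with $y\le z$ is $\ge 1$ only when $\vv$ is nonincreasing, which is not assumed.

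No bookkeeping can rescue the claimed density. Take already $\vv(x)=k-x$, $\jrt\equiv\lambda$, $\rho\equiv 1$. Having $X_{\Delta t}\le\delta$ forces the last jump to fall in the window $[\Delta t-\psi(0,\delta),\Delta t]$, of length $O(\delta)$, and to land in $[0,\delta]$. The latter has vanishing conditional probability, because the pre-jump state is rarely small. Hence $Q(x,[0,\delta])=o(\delta)$, whereas your claim requires $\epsilon_{\Delta t}\nu([0,\delta])\sim\lambda e^{-\lambda\Delta t}\Delta t\,\delta/k$. The paper writes down this same density only because its Lemma~\ref{lemma:u-ineq} evaluates $f$ at $\alpha\phi^s(x)$, omitting the regrowth after the jump. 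The path you describe is the correct one, so you should not try to match that density.

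What you are missing is that Doeblin's bound uses only the mass $\epsilon$, so you need no density at all. Your estimates, together with $\phi^\tau(0)\le w\le k$ and $f\ge 0$, already give, uniformly in $x$,
\[
[Qf](x)\ \ge\ \frac{\rho_*\lambda_*e^{-\lambda^*\Delta t}}{k}\int_0^{\Delta t}\rmd\tau\int_0^{\phi^{\tau}(0)}f\bigl(\phi^{\Delta t-\tau}(y)\bigr)\,\rmd y .
\]
Let $\nu$ be the normalized image of Lebesgue measure on $\{0\le\tau\le\Delta t,\ 0\le y\le\phi^\tau(0)\}$ under $(\tau,y)\mapsto\phi^{\Delta t-\tau}(y)$. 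By Fubini its unnormalized mass is $\int_0^{\Delta t}\phi^\tau(0)\,\rmd\tau=\int_0^{\phi^{\Delta t}(0)}[\Delta t-\psi(0,z)]\,\rmd z=\Phi$. So the minorization constant is exactly $\epsilon_{\Delta t}$, and the theorem follows as you planned. Explicitly, $\nu$ has density $z/(\Phi\,\vv(z))$ on $[0,\phi^{\Delta t}(0)]$, which vanishes at $z=0$, consistent with the example above.
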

%%%%%%%%%%%%%%%%%%%%%%%%%%%%%%%%%%%%%%%%%%%%%%%%%%%%%%%%%%%%%%%%%%%%%%%%%%%%
\begin{remark}  \label{rem:choices-bounded}
The bound on the rate of convergence given by \eqref{eq:dTV-bounded} 
can be optimized by choosing a value of $\Delta t$ that makes this bound tighter.  
It is intuitively reasonable to expect that a value of $\Delta t$ that minimizes 
$\left(1-\epsilon_{\Delta t}\right)^{\lfloor t /\Delta t \rfloor}$ should exist.  
If $\Delta t$ is too small, then a disturbance is unlikely to occur 
in the short time interval of length~$\Delta t$.  
On the other hand, if $\Delta t$ is chosen too large, then we do not control the process 
over a long time interval during which many disturbances of varying severity may occur 
which would make it impossible for us to use any features of the deterministic growth.  
Put differently, the optimal value of $\Delta t$ should correspond to appropriate balance 
between the stochastic and the deterministic components of the dynamics 
-- for $\Delta t$ too small, we observe only the deterministic component, 
while for $\Delta t$ too large, we observe mainly the stochastic one.  
The mathematical intuition behind the existence of an ``optimal'' value of $\Delta t$ 
can be seen from the text in Section~\ref{sec:bounds-bounded} and, in particular, 
from Figure~\ref{fig:minor-domain}.  
\end{remark}
%%%%%%%%%%%%%%%%%%%%%%%%%%%%%%%%%%%%%%%%%%%%%%%%%%%%%%%%%%%%%%%%%%%%%%%%%%%%
The general strategy of the proof of Theorem~\ref{thm:bounded-state-rate} is the following 
(the complete proof is given in Section~\ref{sec:proofs}).  
We begin by discretizing the process by fixing a $\Delta t >0$ 
and studying the resulting discrete-time Markov chain 
with transition kernel~$\uu^{\Delta t}$. 
We then construct a uniform minorization for this discretization, 
which yields well-known exponential bounds on the convergence rates.  
It remains only to apply the well-known fact that 
$d_{\mathrm{TV}}\left( \mu_t , \pi \right)$ is monotonically decreasing in~$t$ 
to obtain bounds for the original continuous time-process.  
\par
While the restriction $\vv(0)>0$ may seem unusual for biological models, it is a reasonable assumption for the carbon content problem since even in the event of a complete catastrophe, there is regrowth.  The specific case of $\vv(x) = 1-x$ with state space $\state = [0, 1]$ was considered in \cite{np1} as a model for carbon content in an ecosystem and meets all conditions of our Theorem \ref{thm:bounded-state-rate}.
In establishing a uniform minorization, it is essential that the state space be bounded.  While this is the case for most applications, such as the carbon content problem, it is mathematically restrictive. Though the proof requires additional work, we are also able to state a result for unbounded state semistochastic processes.  
%%%%%%%%%%%%%%%%%%%%%%%%%%%%%%%%%%%%%%%%%%%%%%%%%%%%%%%%%%%%%%%%%%%%%%%%%%%%%%%%%%
\begin{theorem}\label{thm:unbounded-state-rate}
%%%%%%%%%%%%%%%%%%%%%%%%%%%%%%%%%%%%%%%%%%
Let $\{X_t\}$ be a semistochastic process with generator \eqref{eq:gen} 
on the state space $\state = [0,\infty)$, satisfying 
\begin{itemize}
\item[(i)] 
$0  <   \lambda_* \leq  \jrt(x)  \leq  \lambda^{*}  <  \infty$ for all $ x  \in  \state $, 
for some constants $ \lambda_*$ and~$\lambda^{*}$, 
\item[(ii)] 
$\rho(x,\alpha)  \geq  \rho_{*} $ for all $x \in \state$ and $ \alpha  \in  [0, 1]$, 
for some constant $\rho_* > 0$, 
\item[(iii)]
$\zeta(x) \geq \zeta_*$ for all $x \in \state$, for some constant $\zeta_* > 0$, 
\item[(iv)]
the function $\vv$ is Lipschitz, satisfies 
\begin{equation}  \label{eq:vv}
0 \leq v(x) \leq v^* = \mathrm{const} \ , \qquad v(0) \neq 0 \ , 
\end{equation}
and vanishes for at most finitely many~$x$.  
\end{itemize}
Then $\{X_t\}$ has a unique stationary distribution $\pi$ to which it converges at an exponential rate.  Namely, for any initial distribution $\mu_0$ and any $\Delta t>0$, the estimate 
\begin{equation}  \label{eq:dTV-unbounded}
d_{\mathrm{TV}}\left( \mu_t  ,  \pi \right) 
\leq 
\left(2+ \frac{b}{1-\beta} + \ee_{\mu_0}[X_0] \right) 
\, (1-\epsilon_{\Delta t,\kappa})^{r \lfloor t / \Delta t \rfloor}\ 
\end{equation}
holds with $\Phi$ given by~\eqref{eq:Phi}, 
\[
\epsilon_{\Delta t, \kappa} :=    \frac{ \rho_{*}  \Phi  \zeta_* \lambda_* \exp (-\lambda^{*}  \Delta t)} {\kappa}   \ ,
\]
\[
\beta := e^{-\lambda_* \zeta_* \Delta t} \ , 
\qquad 
b := \frac{\vv^{*}}{\lambda_* \zeta_*} 
\left( 1 \ - \ e^{-\lambda_* \zeta_* \Delta t} \right) \ , 
\]
\begin{equation}\label{eq:thetas}
\theta := \frac{1  +  2b  +  \kappa  \beta}{1  +  \kappa} \ , 
\qquad 
\Theta := 1 + 2(\beta  \kappa +  b) \ , 
\end{equation}
\begin{equation}  \label{eq:r}
r := \frac{\ln\theta}{\ln\frac{\theta(1-\epsilon_{\Delta t,\kappa})}{\Theta}}  
= \frac{\ln\frac{1}{\theta}}{\ln\frac{1}{\theta}+\ln\Theta+\ln\frac{1}{1-\epsilon_{\Delta t,\kappa}} }
\in (0,1) \ , 
\end{equation}
where $\kappa$ can be chosen to be any number satisfying 
\begin{equation}  \label{eq:cond-kappa}
\kappa > \frac{2b}{1-\beta} \ .
\end{equation}
\end{theorem}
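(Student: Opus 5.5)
We follow the same discretization strategy as for Theorem~\ref{thm:bounded-state-rate}. Fix $\Delta t>0$ and study the discrete-time chain with kernel $P:=\uu^{\Delta t}$. Because the state space is unbounded, a single uniform minorization is unavailable. Instead we combine a minorization on a sublevel set with a drift condition, and then invoke a quantitative coupling theorem of Rosenthal type (as in \cite{ros1,ros2}). The Lyapunov function will be $\V(x)=1+x$, or simply $x$, which explains why $\ee_{\mu_0}[X_0]$ appears in \eqref{eq:dTV-unbounded}.

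\textbf{Step 1 (drift).} Apply the generator \eqref{eq:gen} to $f(x)=x$:
\[
\gen f(x)=\vv(x)-\jrt(x)\,x\,\zeta(x)\le \vv^*-\lambda_*\zeta_* x .
\]
Dynkin's formula together with Gr\"onwall's inequality gives $\ee_x[X_{\Delta t}]\le \beta x + b$, with $\beta=e^{-\lambda_*\zeta_*\Delta t}$ and $b=\frac{\vv^*}{\lambda_*\zeta_*}(1-\beta)$, exactly as in the statement. To make Dynkin's formula rigorous we need integrability of $X_t$. This holds because $X_t\le X_0+\vv^* t$: jumps only decrease the value and the flow speed is bounded by $\vv^*$. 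A localization or truncation argument therefore suffices.

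\textbf{Step 2 (minorization on $C=[0,\kappa]$).} We repeat the bounded-case construction with $k$ replaced by $\kappa$. Start from $x\le\kappa$ and consider the event that exactly one disturbance occurs in $[0,\Delta t]$, at time $s$. By (i), the density of this event is at least $\lambda_*e^{-\lambda^*\Delta t}$. The post-jump value $\alpha\phi^s(x)$ has density at least $\rho_*/\phi^s(x)$ on $[0,\phi^s(x)]$. After the jump, the process flows deterministically for the remaining time $\Delta t-s$. Restrict to landings near $0$ and change variables $z=\phi^{\Delta t-s}(y)$; this produces the factor $\Phi$ from \eqref{eq:Phi}, as in Section~\ref{sec:bounds-bounded}. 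The resulting bound is $P(x,\cdot)\ge\epsilon\,\nu(\cdot)$ for all $x\in C$, with a fixed probability measure $\nu$.

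The extra factor $\zeta_*$ in $\epsilon_{\Delta t,\kappa}$ (compared with $\epsilon_{\Delta t}$) must be tracked. I would obtain it by bounding $1/\phi^s(x)\ge 1/(\kappa+\vv^*\Delta t)$ and absorbing the difference, or directly from the normalization used there. This is the bookkeeping step where I expect to fiddle with constants to reproduce exactly $\rho_*\Phi\zeta_*\lambda_*e^{-\lambda^*\Delta t}/\kappa$.

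\textbf{Step 3 (coupling theorem).} Apply Rosenthal's theorem for a bivariate drift. Let $h(x,y)=1+\tfrac12(x+y)$, or an equivalent choice. Outside $C\times C$ we have $x+y>\kappa$, and Step 1 yields
\[
\ee[h(X',Y')]\le \frac{1+2b+\kappa\beta}{1+\kappa}\,h(x,y)=\theta\, h(x,y).
\]
Condition \eqref{eq:cond-kappa} is exactly what guarantees $\theta<1$. On $C\times C$, the supremum of $\ee h$ is bounded by $\Theta=1+2(\beta\kappa+b)$. Rosenthal's bound then gives, for every $j\le n$,
\[
d_{\mathrm{TV}}(\mu_0P^n,\pi)\le(1-\epsilon)^j+\theta^n\Theta^{j-1}\Big(1+\tfrac{b}{1-\beta}+\ee_{\mu_0}X_0\Big),
\]
using $\pi(x)\le b/(1-\beta)$, which follows from the drift condition. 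Existence and uniqueness of $\pi$ follow from the Harris theorem via the same two conditions.

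Choose $j=rn$ with $r$ as in \eqref{eq:r}. This choice equalizes $(1-\epsilon)^{rn}$ and $\theta^n\Theta^{rn}$, and the two terms combine into the prefactor $2+\frac{b}{1-\beta}+\ee_{\mu_0}[X_0]$. Finally, take $n=\lfloor t/\Delta t\rfloor$ and use the monotonicity of $d_{\mathrm{TV}}(\mu_t,\pi)$ in $t$. The main obstacles are the exact constant in Step 2 and verifying that the bivariate drift holds with precisely these $\theta$ and $\Theta$.
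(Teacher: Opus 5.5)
Your route is the paper's: discretize with $Q=\uu^{\Delta t}$, get $\ee_x[X_{\Delta t}]\le \beta x+b$ from $\gen I=\vv-\jrt\zeta I$ via Dynkin's formula and an integrating factor, minorize on $[0,\kappa]$ by the one-jump argument of the bounded case, apply Rosenthal's drift--minorization theorem, balance the two terms, and pass to continuous time. The paper does not re-derive Rosenthal's bound. It quotes it as $d_{\mathrm{TV}}(\mu_0Q^n,\pi)\le(1-\epsilon)^{nr}+(\theta^{1-r}\Theta^r)^n\bigl(1+\frac{b}{1-\beta}+\ee_{\mu_0}[X_0]\bigr)$. Your re-derivation in Step~3 contains a genuine error: the coupling bound $(1-\epsilon)^j+\theta^n\Theta^{j-1}(\cdots)$ is missing a factor $\theta^{-(j-1)}$.

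Consider the supermartingale $\theta^{-k}B^{-N_{k-1}}h(X_k,Y_k)$ on the uncoupled event, where $N_{k-1}$ is the number of visits to $C\times C$ before time $k$. A step taken from $C\times C$ must pay both for the growth of $\ee h$ up to $\Theta$ and for the factor $\theta^{-1}$ charged at every step. Since $h\ge 1$, this forces $B=\Theta/\theta$, and the bound becomes $(1-\epsilon)^j+\theta^{\,n-j+1}\Theta^{\,j-1}(\cdots)$. For $j=rn$ this is at most $(1-\epsilon)^{rn}+(\theta^{1-r}\Theta^r)^n(\cdots)$.

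For the same reason, your claim that the paper's $r$ equalizes $(1-\epsilon)^{rn}$ and $\theta^n\Theta^{rn}$ is false. In fact $r=\ln\theta/\ln\frac{\theta(1-\epsilon)}{\Theta}$ solves $(1-\epsilon)^r=\theta^{1-r}\Theta^r$, which is exactly why $\ln\theta$ appears in its denominator. With the correct bound, the balancing gives the prefactor $2+\frac{b}{1-\beta}+\ee_{\mu_0}[X_0]$ as claimed.

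Two smaller points. First, the bivariate function $h(x,y)=1+\frac12(x+y)$ does not produce the stated constants. Off $C\times C$ it only gives the factor $(2+2b+\beta\kappa)/(2+\kappa)$, which exceeds $\theta$ exactly when $\kappa(1-\beta)>2b$. Take $h(x,y)=1+x+y$ instead:
\begin{itemize}
\item off $C\times C$, since $(1+2b+\beta s)/(1+s)$ decreases in $s$ and $x+y>\kappa$ there, you get $\ee h\le\theta h$;
\item on $C\times C$ you get $\ee h\le 1+2b+2\beta\kappa=\Theta$;
\item initially, $\ee h(X_0,Y_0)\le 1+\ee_{\mu_0}[X_0]+\frac{b}{1-\beta}$.
\end{itemize}

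Second, regarding the constant in Step~2: the paper contains no derivation of the factor $\zeta_*$ in $\epsilon_{\Delta t,\kappa}$. It only says that the bounded-case argument works on $[0,\kappa]$. Done honestly, as you do it, that argument gives $\rho_*\lambda_*e^{-\lambda^*\Delta t}\Phi/\phi^{\Delta t}(\kappa)$, because for $x\le\kappa$ the pre-jump value $\phi^s(x)$ may exceed $\kappa$. This dominates the stated $\epsilon_{\Delta t,\kappa}$ only when $\zeta_*\,\phi^{\Delta t}(\kappa)\le\kappa$. So you should not expect to recover the stated constant from this argument in general by absorbing the difference.
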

%%%%%%%%%%%%%%%%%%%%%%%%%%%%%%%%%%%%%%%%%%
\begin{remark}
From the argument in Section~\ref{sec:bounds-unbounded}, 
it can be easily seen that 
the assumptions on the lower bounds on $\Lambda(x)$ and $\zeta(x)$ 
in the statement of Theorem~\ref{thm:unbounded-state-rate} 
could be relaxed to 
$\inf_{x \in \state} \left[ \Lambda(x) \zeta(x) \right] > 0$.  
In this case the statement of Theorem~\ref{thm:unbounded-state-rate} 
remains unchanged if the product $\lambda_* \zeta_*$ 
is replaced by 
$\inf_{x \in \state} \left[ \Lambda(x) \zeta(x) \right]$.  
\end{remark}
%%%%%%%%%%%%%%%%%%%%%%%%%%%%%%%%%%%%%%%%%%
\begin{remark}  \label{rem:choices}
Note that the bound \eqref{eq:dTV-unbounded} on the rate of convergence 
depends on the choice of $\Delta t$ and $\kappa$ (cf.~Remark~\ref{rem:choices-bounded}).  
To obtain tight bounds, one can choose values of $\Delta t$ and $\kappa$ that minimize 
$( 1 - \epsilon_{\Delta t,\kappa} )^{r / \Delta t}$, which can be done numerically 
as shown in the examples in Section~\ref{sec:examples}.  
Intuitively, the optimal value of $\Delta t$ corresponds to balancing 
the deterministic and the stochastic components of the process.  
The optimal value of $\kappa$, on the other hand, balances the rate of convergence 
while staying in $[0,\kappa]$ with the time it takes to re-enter the region $[0,\kappa]$ after leaving it, 
as one can see from the proof in Section~\ref{sec:bounds-unbounded}.  
This can be clearly seen in the numerical example in Section~\ref{sec:example-unbounded} 
and, in particular, in Figures \ref{fig:rate5} and~\ref{fig:rate6}.  
\end{remark}
%%%%%%%%%%%%%%%%%%%%%%%%%%%%%%%%%%%%%%%%%%
The details of the proof of Theorem~\ref{thm:unbounded-state-rate} are provided in Section~\ref{sec:proofs}.
The strategy of the proof is similar to that of Theorem \ref{thm:bounded-state-rate}.  The primary difference is that in this case we cannot establish a uniform minorization.  Instead, we establish a combination of drift and minorization conditions which enables us to apply a result of Rosenthal \cite{ros1} (see also \cite{ros2,RobertsT2000}) 
to produce the desired bounds on convergence rates. 
%%%%%%%%%%%%%%%%%%%%%%%%%%%%%%%%%%%%%%%%%%%%%%%%%%%%%%%%%%%%%%%%%%%%%%%%%%%%%%%%%%
%%%%%%%%%%%%%%%%%%%%%%%%%%%%%%%%%%%%%%%%%%%%%%%%%%%%%%%%%%%%%%%%%%%%%%%%%%%%%%%%%%

\section{Proofs of the main results\label{sec:proofs}}
%%%%%%%%%%%%%%%%%%%%%%%%%%%%%%%%%%%%%%%%%%
The proofs of Theorems \ref{thm:bounded-state-rate} and 
\ref{thm:unbounded-state-rate} follow similar ideas, 
so we develop them in parallel.  
In Section~\ref{sec:inequalities} we derive some inequalities 
about the Markov semigroup~$\uu^t$ and relate the rates of convergence 
of the continuous-time semigroup $\uu^t$ and of its discretization 
(see \eqref{eq:Q-U} below) to the stationary distribution.  
We define the drift condition and state some results on minorization 
in Section~\ref{sec:minorization}.  
The bounds of the rates of convergence for bounded and unbounded 
state space are derived in Sections \ref{sec:bounds-bounded} 
and~\ref{sec:bounds-bounded}, respectively.  
%%%%%%%%%%%%%%%%%%%%%%%%%%%%%%%%%%%%%%%%%%
\subsection{Some useful inequalities\label{sec:inequalities}}
%%%%%%%%%%%%%%%%%%%%%%%%%%%%%%%%%%%%%%%%%%
We separate the infinitesimal generator into two components, 
$\gen=\gen_0+\gen_1$, where 
\[
[\gen_0f](x)= f'(x)v(x)-\jrt(x)f(x) 
\]
corresponds to deterministic evolution plus a loss term, and 
\[
[\gen_1f](x)=\jrt(x)\int_0^1 \rho(x,\alpha) f(y)  \, \rmd \alpha 
\] 
reflects the ``gain''.  
We introduce the {\em semistochastic survival function\/}, 
\begin{equation}\label{eq:survival}
S(t, x) := \exp\left(-\int_0^t  \jrt \left(\phi^s (x)\right) \, \rmd s \right) \ ,
\end{equation}
which represents the conditional probability of starting at $x$ 
and evolving deterministically for time $t$ 
with no occurrence of a disturbance.  
Then the sub-Markov semigroup $\uu_0$ generated by $\gen_0$ is 
\[
[\uu_0^t f](x)=S(t, x) \, f(\phi^t(x)) \ ,
\]
%%%%%%%%%%%%%%%%%%%%%%%%%%%%%%%%%%%%%%%%%%
which can be verified directly using that 
\[
\frac{\partial}{\partial t} S(t,x) 
= 
- \Lambda(\phi^t(x)) \, S(t,x) \ , \qquad 
\frac{\partial}{\partial t} f(\phi^t(x)) 
= 
v(\phi^t(x)) \, f'(\phi^t(x)) \ .
\]
%%%%%%%%%%%%%%%%%%%%%%%%%%%%%%%%%%%%%%%%%%
The Markov semigroup $\uu^t$ can be computed iteratively, as given in the following 
%%%%%%%%%%%%%%%%%%%%%%%%%%%%%%%%%%%%%%%%%%%%%%%%%%%%%%%%%%%%%%%%%%%%%%%%%%%%%%%%%%%%
\begin{proposition} \label{prop:u-expansion}
Let $\uu^t$ be a strongly continuous Markov semigroup with infinitesimal generator $\gen$ and assume that $\gen = \gen_0 +\gen_1$, with $\gen_0$ generating the sub-Markov semigroup $\uu_0^t$. Then the action of $\uu^t$ on an observable $f$ can be decomposed into
\begin{equation*}
[\uu^{t} f](x) =
[\uu_{0}^{t}f](x)
+\int_{0}^{t} \left[\uu_{0}^{t-s} \left( \gen-\gen_0 \right) \uu^{s}f\right](x) \, \rmd s \ .
\end{equation*}
\end{proposition}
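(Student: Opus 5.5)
This is the Duhamel (variation of constants) formula. We prove it by differentiating an interpolating family. Fix $t>0$ and an observable $f$ in the domain of $\gen$, and define for $s\in[0,t]$
\[
g(s) := \uu_0^{t-s}\,\uu^{s} f \ .
\]
Then $g(0)=\uu_0^t f$ and $g(t)=\uu^t f$. The claim is then
\[
g(t)-g(0)=\int_0^t g'(s)\,\rmd s \quad\text{with}\quad g'(s)=\uu_0^{t-s}(\gen-\gen_0)\uu^s f \ .
\]

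To compute $g'(s)$ we use the product rule for semigroups. Since $f\in D(\gen)$, the function $\uu^s f$ stays in $D(\gen)$ and satisfies $\frac{\rmd}{\rmd s}\uu^s f=\gen\uu^s f$. The factor $\uu_0^{t-s}$ is differentiated in $s$ with derivative $-\uu_0^{t-s}\gen_0$ applied to elements of $D(\gen_0)$. We need $D(\gen)\subseteq D(\gen_0)$, which holds here because $\gen_1=\gen-\gen_0$ is a bounded operator: under $\jrt\le\lambda^*$ it is an integral operator with norm at most $\lambda^*$ in sup-norm. Formally, this gives
\[
g'(s)=-\uu_0^{t-s}\gen_0\uu^s f+\uu_0^{t-s}\gen\uu^s f=\uu_0^{t-s}(\gen-\gen_0)\uu^s f \ .
\]

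The main obstacle is justifying this differentiation rigorously. We split the difference quotient as
\[
\tfrac1h\bigl[g(s+h)-g(s)\bigr]=\uu_0^{t-s-h}\,\tfrac1h\bigl(\uu^{s+h}f-\uu^s f\bigr)+\tfrac1h\bigl(\uu_0^{t-s-h}-\uu_0^{t-s}\bigr)\uu^s f \ .
\]
For the first term we use the uniform bound $\|\uu_0^{\tau}\|\le 1$ (sub-Markov) together with strong continuity of $\uu_0$. This lets us replace $\uu_0^{t-s-h}$ by $\uu_0^{t-s}$ up to an error that vanishes as $h\to0$, since $\tfrac1h(\uu^{s+h}f-\uu^sf)\to\gen\uu^sf$ in norm. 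For the second term we use $\uu^s f\in D(\gen_0)$ and the standard identity $\frac{\rmd}{\rmd\tau}\uu_0^\tau u=\uu_0^\tau\gen_0 u$ for $u\in D(\gen_0)$, which gives the limit $-\uu_0^{t-s}\gen_0\uu^sf$.

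Continuity of $s\mapsto g'(s)$ follows from the same uniform bounds together with the boundedness of $\gen_1$ and strong continuity of both semigroups. The fundamental theorem of calculus for Banach-valued $C^1$ functions then yields the formula, evaluated pointwise at $x$. Finally, the identity extends from $D(\gen)$ to all observables in the Banach space. Both sides are bounded operators applied to $f$, the integrand being bounded by $\lambda^*\|f\|$, and $D(\gen)$ is dense, so the formula holds for every $f$.
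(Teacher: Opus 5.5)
Your proof is correct and uses the same argument as the paper: differentiate $s\mapsto\uu_0^{t-s}\uu^{s}f$, whose derivative is $\uu_0^{t-s}(\gen-\gen_0)\uu^{s}f$, and integrate over $[0,t]$. You also supply the domain, difference-quotient, continuity and density-extension justifications that the paper's formal computation skips; these rely on boundedness of $\gen_1$ (via $\jrt\le\lambda^*$), which holds in the paper's setting but is not among the proposition's abstract hypotheses.
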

%%%%%%%%%%%%%%%%%%%%%%%%%%%%%%%%%%%%%%%%%%%%%%%%%%%%%%%%%%%%%%%%%%%%%%%%%%%%%%%%%%%
\begin{proof}
Let $0\leq s \leq t$, and recall that $\uu^0$ and $\uu_0^0$ are both identity operators. 
Then 
\begin{align*}
\int_{0}^{t}\left[ \uu_{0}^{t-s} (\gen-\gen_0 ) \uu^{s}f\right](x) \, \rmd s
&=
\int_{0}^{t}\left[  \frac{\rmd}{\rmd s}  \left(\uu_{0}^{t-s} \uu^s \right) f \right](x) \, \rmd s \\
&= 
\ \left[   \left(\uu_{0}^{0} \, \uu^t - \uu_{0}^{t} \, \uu^0  \right) f \right](x) 
= 
\left[\uu^t f\right](x) - \left[\uu_{0}^{t} f\right](x) \ .
\end{align*}
Solving for $\uu^t$ above yields the result. 
\end{proof}
%%%%%%%%%%%%%%%%%%%%%%%%%%%%%%%%%%%%%%%%%%%%%%%%%%%%%%%%%%%%%%%%%%%%%%%%%%%%%%%%%
Combining this with the expression \eqref{eq:gen} for $\gen$, we have 
\begin{equation}  \label{eq:ttt}
\begin{split}
[\uu^t f](x)
&=
\left[\uu_{0}^{t}f\right](x)  +  \int_{0}^{t} 
\bigl[\uu_{0}^{t-s} ( \gen-\gen_0) \uu^{s}f\bigr](x)\, \rmd s \\
&=
S(t, x) f(\phi^t(x))\\
&\quad +
\ \int_0^t \rmd s \,  S(t-s, x) \, \jrt \left(\phi^{t-s}(x)\right)
 \int P \left(\phi^{t-s}(x), \rmd y \right) \, \left[\uu^s  f\right](y) \ .
\end{split}
\end{equation}
Noticing that in \eqref{eq:ttt}, $\uu^t_0$ is positive, we obtain 
%%%%%%%%%%%%%%%%%%%%%%%%%%%%%%%%%%%%%%%%%%%%%%%%%%%%%%%%%%%%%%%%%%%%%%%%%%%%%%%%%%%%%
\begin{lemma}\label{lemma:u-ineq}
If $\uu^t$ is a Markov semigroup with infinitesimal generator $\gen$ \eqref{eq:gen}, then 
\begin{align*}
\left[\uu^t f\right](x) 
\geq 
\int_0^t \rmd s \, S(t-s, x) \, \jrt \left(\phi^{t-s}(x)\right)
\int_0 ^1 \rmd \alpha \, \rho(x,\alpha) \, S(s, \alpha \phi^s( x)) \, f(\alpha \phi^s( x))  \ .
\end{align*}
\end{lemma}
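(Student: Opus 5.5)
The plan is to iterate the Duhamel-type identity \eqref{eq:ttt} once, and to throw away nonnegative terms at two places. The lemma is meant for nonnegative observables $f \geq 0$, which is the only case used in the minorization argument. Throughout I will use positivity of $\uu^t$ and of $\uu_0^t$: since $S(t,x) > 0$, each maps nonnegative functions to nonnegative functions.

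\textbf{Step 1 (drop the no-jump term).} Start from \eqref{eq:ttt} and discard the first term $S(t,x) f(\phi^t(x)) \geq 0$. This gives
\[
[\uu^t f](x) \geq \int_0^t \rmd s \, S(t-s,x)\,\jrt(\phi^{t-s}(x)) \int P(\phi^{t-s}(x),\rmd y)\,[\uu^s f](y) .
\]
Here I will make the jump kernel explicit using \eqref{eq:gen} and \eqref{eq:multiplier-density}. From a pre-jump point $z$, the post-jump point is $y = \alpha z$, where $\alpha$ has density $\rho(z,\alpha)$. Hence
\[
\int P(z,\rmd y)\, g(y) = \int_0^1 \rho(z,\alpha)\, g(\alpha z)\, \rmd\alpha ,
\qquad z = \phi^{t-s}(x).
\]

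\textbf{Step 2 (bound the inner semigroup from below).} Apply \eqref{eq:ttt} again, now at time $s$ and starting point $y$. The integral term there is nonnegative, again by positivity of $\uu_0$, of $\jrt$ and $P$, and of $\uu^{s'} f$. So I obtain $[\uu^s f](y) \geq S(s,y)\, f(\phi^s(y))$. This says the process starting at $y$ survives for time $s$ without a further disturbance. Substituting $y = \alpha z$ into the display of Step 1 yields a lower bound of exactly the form in the lemma. It involves one disturbance in $[0,t]$ and deterministic flow before and after it, with the survival factors $S(t-s,x)$ and $S(s,\cdot)$, the rate $\jrt(\phi^{t-s}(x))$ at the jump, and the density $\rho$ at the jump. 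Because all integrands are nonnegative, Tonelli's theorem justifies interchanging the $s$ and $\alpha$ integrals.

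\textbf{Main obstacle.} The analysis itself is routine; the real issue is the bookkeeping of arguments. The point at which the jump occurs is $\phi^{t-s}(x)$, so the multiplier density should be evaluated there and the post-jump point is $\alpha\phi^{t-s}(x)$. I will carry the computation out with these arguments, which is what the iteration literally produces. I will then note that the lemma's compact notation, with $\rho(x,\alpha)$, $S(s,\alpha\phi^s(x))$ and $f(\alpha\phi^s(x))$, is to be read this way. The distinction is harmless for the later application, because there one only uses the uniform bounds $\rho \geq \rho_*$ and $\jrt \geq \lambda_*$, and $S \geq e^{-\lambda^* t}$, which do not depend on these arguments.

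A secondary technical point is the rigorous validity of \eqref{eq:ttt}, namely differentiability of $s \mapsto \uu_0^{t-s}\uu^s f$ in Proposition~\ref{prop:u-expansion}. I take this as given from the proposition, applied to bounded measurable $f$, extending from a core by monotone approximation if needed.
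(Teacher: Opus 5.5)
Your two steps (discard $S(t,x)f(\phi^t(x))$ in \eqref{eq:ttt}, then apply \eqref{eq:ttt} again to get $[\uu^s f](y)\ge S(s,y)f(\phi^s(y))$) are exactly the paper's argument, whose entire proof is the remark that $\uu_0^t$ is positive in \eqref{eq:ttt}, and they are correct. But what they prove is
\[
[\uu^t f](x) \ge \int_0^t \rmd s\, S(t-s,x)\,\jrt\bigl(\phi^{t-s}(x)\bigr)\int_0^1 \rmd\alpha\,\rho\bigl(\phi^{t-s}(x),\alpha\bigr)\,S\bigl(s,\alpha\phi^{t-s}(x)\bigr)\,f\bigl(\phi^s(\alpha\phi^{t-s}(x))\bigr).
\]
The part that does not hold up is your claim that the displayed lemma is compact notation for this, and that the difference is harmless. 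For $\rho$ and $S$ the change of argument is indeed immaterial, since only $\rho_*$, $\lambda_*$ and $e^{-\lambda^* t}$ are used later. The argument of $f$ is another matter. $f(\alpha\phi^s(x))$ omits the deterministic flow during the time $s$ that remains after the jump, and with that argument the inequality is false.

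For instance, take the paper's second example ($v\equiv 1$, $\jrt\equiv\lambda$, $\rho\equiv 1$), fix $x,t>0$, and let $f=\1_{[0,\delta]}$ with $\delta$ small. The stated right-hand side equals $\lambda e^{-\lambda t}\,\delta\ln\frac{x+t}{x}$, which is of order $\delta$. On the other hand, paths increase at unit speed between jumps, so $X_t\le\delta$ forces a jump in $(t-\delta,t]$ that lands in $[0,\delta]$. Hence $\pp_x(X_t\le\delta)\le\lambda\int_{t-\delta}^t \ee_x\min\{1,\delta/X_u\}\,\rmd u$. Using $\min\{1,\delta/X_u\}\le\1\{X_u\le\sqrt\delta\}+\sqrt\delta$ and $\pp_x(X_u\le\sqrt\delta)\le\lambda\sqrt\delta$ (by the same reasoning) gives $\pp_x(X_t\le\delta)\le\lambda(1+\lambda)\delta^{3/2}$, which is smaller than the stated right-hand side for small $\delta$.

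So the discrepancy is not harmless downstream either. The minorization in Section~\ref{sec:bounds-bounded} uses exactly this argument of $f$ (through the substitution $z=\alpha\phi^s(x)$), and it produces an $\eta$ whose density at $z=0$ is $\Delta t/\Phi>0$. The same estimate, which only needs $v(0)>0$ and bounded $\jrt,\rho$, gives $Q(x,[0,\delta])=o(\delta)$, so $\epsilon_{\Delta t}\eta$ cannot be a minorant of $Q$. You should state the lemma in your corrected form and say explicitly that the displayed version fails. You should also note that the minorization must be redone with the substitution $w=\phi^s(y)$, $y=\alpha\phi^{\Delta t-s}(x)$. That substitution brings in a Jacobian $v(y)/v(w)$ and yields a minorizing density that vanishes at $0$.
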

%%%%%%%%%%%%%%%%%%%%%%%%%%%%%%%%%%%%%%%%%%
Next, we establish an inequality linking convergence rates for continuous-time Markov processes to their discretizations.  
We discretize the continuous-time process $\{X_t\}$ by sampling it at times that are separated by time increments 
of fixed specified size~$\Delta t$.  
The choice of a constant separation time $\Delta t$ allows for straightforward comparison between the continuous-time process $\{X_t\}$ and the discretized process $\{X_{n \, \Delta t}\}_{n \geq 0}$.  
The optimal value of $\Delta t$ (recall Remark~\ref{rem:choices-bounded}) 
can be selected in each particular example, as illustrated in Section~\ref{sec:examples}.  
%%%%%%%%%%%%%%%%%%%%%%%%%%%%
%%%%%%%%%%%%%%%%%%%%%%%%%%%%
\begin{lemma}\label{lemma:continuous-to-discrete}
Let $\pi$ denote the stationary distribution for a continuous-time Markov process $\{X_t\}$  with Markov semigroup \, $\uu^t$ and let $\Delta t > 0$ be a fixed time increment.  
If we set 
\begin{equation}  \label{eq:Q-U}
Q= \uu^{\Delta t} \ , 
\end{equation}
then for any initial distribution $\mu_0$ of~$X_0$, 
\begin{equation*}
d_{\mathrm{TV}}\left( \mu_0 \, \uu^t  ,  \pi \right)   \leq  d_{\mathrm{TV}}\left( \mu_0 \, Q^n  , \pi \right) \ ,
\end{equation*}
where $n=\lfloor t / \Delta t \rfloor$ is the greatest integer less than or equal to $t / \Delta t$\ .
\end{lemma}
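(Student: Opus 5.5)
The plan is to prove the lemma by combining two facts: total variation distance to $\pi$ is non-increasing under the Markov semigroup, and $\pi$ is invariant for $\uu^s$ for every $s\ge 0$. Write $t = n\,\Delta t + s$ with $n=\lfloor t/\Delta t\rfloor$ and $0\le s<\Delta t$. By the semigroup property, $\mu_0\uu^t = (\mu_0 Q^n)\,\uu^{s}$. Since $\pi$ is stationary, $\pi = \pi\uu^{s}$. Hence
\[
d_{\mathrm{TV}}(\mu_0\uu^t,\pi) = d_{\mathrm{TV}}\bigl((\mu_0Q^n)\uu^{s},\,\pi\uu^{s}\bigr),
\]
and the claim follows once we show that a Markov kernel contracts total variation.

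For the contraction step, fix any two probability measures $\nu_1,\nu_2$ and any observable $f$ with $0\le f\le 1$. Because $\uu^s$ is Markov, the function $g:=\uu^s f$ also satisfies $0\le g\le 1$. This uses only positivity and $\uu^s\1=\1$, with $g(x)=\ee[f(X_s)\,|\,X_0=x]$. Then $(\nu_i\uu^s)(f)=\nu_i(g)$, so
\[
|\nu_1\uu^s(f)-\nu_2\uu^s(f)| = |\nu_1(g)-\nu_2(g)| \le d_{\mathrm{TV}}(\nu_1,\nu_2).
\]
Taking the supremum over $f$ gives $d_{\mathrm{TV}}(\nu_1\uu^s,\nu_2\uu^s)\le d_{\mathrm{TV}}(\nu_1,\nu_2)$. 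Applying this with $\nu_1=\mu_0Q^n$ and $\nu_2=\pi$ yields the stated inequality.

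The only point needing care is measurability. The class of observables in the definition of $d_{\mathrm{TV}}$ (bounded measurable $f$ with $0\le f\le1$) must be mapped into itself by $\uu^s$. This holds because $\uu^s$ is given by a transition kernel. Stationarity of $\pi$ for the continuous-time semigroup, namely $\pi\uu^s=\pi$ for all $s$, is part of the hypothesis. I expect no real obstacle here: the argument is the standard monotonicity of total variation distance along a Markov evolution, applied over the leftover time $s\in[0,\Delta t)$.
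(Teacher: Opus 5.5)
Your proposal is correct and follows essentially the same route as the paper: write $t=n\Delta t+\tau$, use the invariance $\pi=\pi\uu^{\tau}$, and observe that $\uu^{\tau}$ maps observables with $0\le f\le 1$ into the same class, so the leftover time can only shrink the distance. Your version also takes the supremum over $0\le g\le 1$, which matches the paper's definition of $d_{\mathrm{TV}}$ exactly; the paper's proof writes it over $|g|_\infty\le 1$, which with that definition overstates the distance by a factor of $2$.
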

\begin{proof}
Write $t  =  n\Delta t  + \tau$ for $0 \leq \tau < \Delta t$, 
then for any observable $f$ with $0 \leq f \leq  1$, 
\begin{align*}
\left|\mu_0 \, \uu^t f- \pi f\right| 
&= 
\left|\mu_0 \, \uu^{n\Delta t}\, \uu^{\tau} f  -   \pi f\right| 
= 
\left| \mu_0 \, \uu^{n\Delta t}\, \uu^{\tau}  f   -   \pi \uu^\tau f\right|\\
&\leq 
\sup_{|g|_{\infty}  \leq \ 1} \left|\mu_0 \, \uu^{n\Delta t}\, g   -  \pi g\right| 
= \ d_{\mathrm{TV}}( \mu_{0} \, Q^n , \pi ) \ ,
\end{align*}
where we used the invariance of $\pi$ and the fact that 
$ 0 \leq \uu^t f  \leq  1$.  
%%%%%%%%%%%%%%%%%%%%%%%%%%%%%%%%%%%%%%%%%%
\end{proof} 
%%%%%%%%%%%%%%%%%%%%%%%%%%%%%%%%%%%%%%%%%%%%%%%%%%%%%%%%
%%%%%%%%%%%%%%%%%%%%%%%%%%%%%%%%%%%%%%%%%%%%%%%%%%%%%%%%
%%%%%%%%%%%%%%%%%%%%%%%%%%%%%%%%%%%%%%%%%%%%%%%%%%%%%%%
\subsection{Minorization and drift condition\label{sec:minorization}}
%%%%%%%%%%%%%%%%%%%%%%%%%%%%%%%%%%%%%%%%%%
A Markov chain $X_n$ with transition kernel $Q$ on a state space $\state$ is said to satisfy a \emph{minorization condition\/} on a subset $A\subseteq \state$
if there is a probability measure $\eta$ on $\state$,  a positive integer $n_0$, and a number $\epsilon >0$ such that 
\begin{equation}  \label{eq:minor}
Q^{n_0}(x, B) \ \geq \  \epsilon \, \eta(B)
\end{equation}
for all $x\in A$ and for any measurable set $B$ of~$\state$.
By appropriately redefining $Q$, we can write this condition as 
\begin{equation}  \label{eq:minor-1}
\left[ Q f\right] (x) 
= \int Q(x, \rmd y) \, f(y) 
\geq 
\epsilon  \int f(y) \, \rmd \eta(y)  
\end{equation}
for any nonnegative observable $f$ and for all $x \in A$.  
If in these conditions the subset $A$ is the whole state space~$\state$, 
we say that $X_n$ admits a \emph{uniform minorization}.  
%%%%%%%%%%%%%%%%%%%%%%%%%%%%%%%%%%%%%%%%%%
The following theorem can be found in \cite{doob} or~\cite{tweed3}. 
%%%%%%%%%%%%%%%%%%%%%%%%%%%%%%%%%%%%%%%%%%
\begin{theorem}\label{thm:uniform-minorization}
If there exists an $n_0 \in \bbN$ such that the transition kernel $Q$ of a Markov chain on a state space $\state$ satisfies \eqref{eq:minor}
for all $x \in \state$ and any measurable set $B\subseteq \state$, 
then for any initial distribution $\mu_0$, 
the total variation distance to its unique stationary distribution $\pi$  satisifes
\[
d_{\mathrm{TV}}\left( \mu_0 \, Q^n , \pi \right) 
\leq 
\left(1-\epsilon \right)^{\lfloor n/n_0\rfloor} \ .
\]
\end{theorem}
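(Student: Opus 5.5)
The plan is to run the classical Doeblin coupling argument for the chain with kernel $Q$. The key facts are the splitting $Q^{n_0}(x,\cdot) = \epsilon\,\eta + (1-\epsilon)R(x,\cdot)$ and the contraction of total variation under Markov kernels.

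First I define the residual kernel. Set
\[
R(x,B) := \frac{Q^{n_0}(x,B) - \epsilon\,\eta(B)}{1-\epsilon}.
\]
By \eqref{eq:minor} with $A=\state$, $R$ is a genuine Markov kernel when $\epsilon<1$. If $\epsilon=1$, then $Q^{n_0}(x,\cdot)=\eta$ for every $x$, and the argument below degenerates trivially.

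Next I show that $Q^{n_0}$ contracts total variation. For any two probability measures $\mu,\nu$, the $\eta$-parts cancel:
\[
\mu Q^{n_0}-\nu Q^{n_0} = (1-\epsilon)(\mu R - \nu R).
\]
For $0\le f\le 1$ we have $0\le Rf\le 1$, so
\[
|\mu R f - \nu R f| \le d_{\mathrm{TV}}(\mu,\nu).
\]
Hence $d_{\mathrm{TV}}(\mu Q^{n_0},\nu Q^{n_0})\le (1-\epsilon)\,d_{\mathrm{TV}}(\mu,\nu)$. By the same positivity argument, applying $Q$ alone never increases $d_{\mathrm{TV}}$.

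Then I establish existence and uniqueness of $\pi$. The space of probability measures with the total variation norm is complete, and $\mu\mapsto\mu Q^{n_0}$ is a strict contraction on it. Banach's fixed point theorem therefore gives a unique fixed point $\pi_0$ of $Q^{n_0}$. The measure $\pi_0 Q$ is also fixed by $Q^{n_0}$, so $\pi_0 Q=\pi_0$ and $\pi:=\pi_0$ is stationary for $Q$. Any $Q$-stationary measure is $Q^{n_0}$-stationary, so $\pi$ is unique.

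For the rate, write $n = m n_0 + j$ with $m=\lfloor n/n_0\rfloor$ and $0\le j<n_0$. Using $\pi = \pi Q^{n}$,
\[
d_{\mathrm{TV}}(\mu_0 Q^n,\pi) = d_{\mathrm{TV}}\big((\mu_0Q^j)Q^{mn_0},\,\pi Q^{mn_0}\big) \le (1-\epsilon)^m\, d_{\mathrm{TV}}(\mu_0Q^j,\pi) \le (1-\epsilon)^m.
\]
The last step uses that with the paper's definition, $d_{\mathrm{TV}}\le 1$ for probability measures.

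The only delicate step is the completeness and fixed-point argument, which needs the sup over $0\le f\le1$ to define a complete metric on probability measures. If I prefer to avoid that, I can instead show $(\mu_0 Q^{n})$ is Cauchy directly from the contraction estimate and take the setwise limit.
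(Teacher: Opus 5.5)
Your proof is correct and complete. The splitting $Q^{n_0}=\epsilon\eta+(1-\epsilon)R$ gives a $(1-\epsilon)$-contraction of $d_{\mathrm{TV}}$, Banach's fixed point theorem then gives existence and uniqueness of $\pi$, and writing $n=mn_0+j$ produces exactly the stated bound. The paper gives no proof of its own here, only a citation to Doob and Meyn--Tweedie, and your self-contained argument is the classical one behind that result. Note that what you actually run is a contraction argument, not the coupling you announce in your first line; the coupling version, which builds two copies that coalesce with probability $\epsilon$ every $n_0$ steps, gives the same bound.
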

%%%%%%%%%%%%%%%%%%%%%%%%%%%%%%%%%%%%%%%%%%
In the proof of Theorem~\ref{thm:unbounded-state-rate} 
we need to impose an additional condition.  
%%%%%%%%%%%%%%%%%%%%%%%%%%%%%%%%%%%%%%%%%%
A Markov chain $X_n$ with state space $\state$ satisfies a \emph{drift condition\/} 
if there exists a nonnegative function $\V:\state \mapsto \bbR_{\geq 0}$, 
a number $\beta < 1$, and some finite $b \in \bbR$ such that 
\begin{equation}  \label{eq:drift-condition}
\ee\left[ \V( X_1) | X_0 = x\right] \ \leq \ \beta \, \V(x) \ + \ b
\end{equation}
for all $x\in \state$.  The function $\V$ has sometimes been referred to as \textit{Lyapunov function\/} in the literature.  
%%%%%%%%%%%%%%%%%%%%%%%%%%%%%%%%%%%%%%%%%%
When a uniform minorization is unavailable, 
one can first establish a drift condition, 
and subsequently minorize on a subset $A$ of~$\state$, 
to obtain the following result proved in~\cite[Theorem 12]{ros1}. %%%%%%%%%%%%%%%%%%%%%%%%%%%%%%%%%%%%%%%%%%
\begin{theorem}\label{thm:drift-minorization}
Suppose a Markov chain $\{X_n\}$ with transition kernel $Q$ on a state space $\state$ 
satisfies a drift condition \eqref{eq:drift-condition}, 
and a minorization condition~\eqref{eq:minor} 
on the set $A = \V^{-1}([0, \kappa]) \subseteq \state$, 
for some $\kappa$ satisfying~\eqref{eq:cond-kappa}.  
Then the Markov chain $\{X_n\}$ has a unique stationary distribution 
$\pi$, and for any $0<r<1$ and any $n \in \bbN$, we have for any initial distribution $\mu_0$  
\begin{equation}  \label{eq:dTV-rosenthal}
d_{\mathrm{TV}}\left( \mu_0 \, Q^n ,  \pi \right) 
\leq 
(1-\epsilon)^{n r} +  
\left( \theta^{1-r} \Theta^{r}\right)^{n} \left(1+ \frac{b}{1-\beta} 
	+ \ee_{\mu_0}[\V(X_0)] \right) \ ,
\end{equation}
with $\theta$ and $\Theta$ given by~\eqref{eq:thetas}.  
\end{theorem}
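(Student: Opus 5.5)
The statement is Rosenthal's coupling theorem \cite[Theorem 12]{ros1}, and I would reprove it by a coupling argument. First I reduce to $n_0=1$ in the minorization, since that is the case used later. Let $C:=A\times A$ with $A=\V^{-1}([0,\kappa])$. I build a bivariate chain $(X_n,X_n')$ with $X_0\sim\mu_0$ and $X_0'\sim\pi$, together with a coupling time $T$. The moves are as follows.

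\begin{itemize}
\item If the two copies have already met, they move together according to $Q$.
\item If $(X_n,X_n')\notin C$, they move independently according to $Q$.
\item If $(X_n,X_n')\in C$, then with probability $\epsilon$ both jump to a common point drawn from $\eta$, and $T=n+1$.
\item Otherwise (still in $C$) they move independently according to the residual kernel $R(x,\cdot):=(Q(x,\cdot)-\epsilon\eta)/(1-\epsilon)$, which is a probability kernel by \eqref{eq:minor}.
\end{itemize}

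Each marginal is then a $Q$-chain. The coupling inequality gives $d_{\mathrm{TV}}(\mu_0Q^n,\pi)\le\pp(T>n)$. Here I use that $\pi$ exists, which follows from the drift and minorization conditions by standard Harris-chain theory, or can be obtained as a limit using the Cauchy property that the same bound yields.

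Let $N_n$ be the number of times $k<n$ at which $(X_k,X_k')\in C$, and fix $j=\lceil rn\rceil$. I split
\[
\pp(T>n)\le \pp(T>n,\,N_n\ge j)+\pp(T>n,\,N_n<j).
\]
Each visit to $C$ gives an independent chance $\epsilon$ of coupling, so the first term is at most $(1-\epsilon)^{j}\le(1-\epsilon)^{rn}$.

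For the second term I use the bivariate function $h(x,y):=1+\V(x)+\V(y)$.

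\begin{itemize}
\item Off $C$, with $s:=\V(x)+\V(y)>\kappa$, the drift condition \eqref{eq:drift-condition} gives
\[
\ee[h(X_1,X_1')]\le 1+\beta s+2b=\frac{1+2b+\beta s}{1+s}\,h(x,y).
\]
This ratio is decreasing in $s$ because $1+2b>\beta$, so it is at most $\theta$. The condition \eqref{eq:cond-kappa} is exactly what makes $\theta<1$.
\item On $C$, the same bound gives $\ee[h]\le 1+2\beta\kappa+2b=\Theta$, using $h\ge1$.
\end{itemize}

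I then define
\[
M_n:=\theta^{-n}\,(\Theta/\theta)^{-N_n}\,h(X_n,X_n')\,\1_{\{T>n\}},
\]
and the goal is to show that it is a supermartingale. Given that, on $\{N_n<j\}$ we have $(\Theta/\theta)^{-N_n}\ge(\Theta/\theta)^{-j}$, so Markov's inequality yields
\[
\pp(T>n,N_n<j)\le \theta^{n}(\Theta/\theta)^{j}\,\ee[h(X_0,X_0')]\approx(\theta^{1-r}\Theta^r)^n\,\ee[h(X_0,X_0')].
\]
Finally, $\ee[h(X_0,X_0')]=1+\ee_{\mu_0}[\V]+\pi(\V)$. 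Iterating the drift condition under stationarity gives $\pi(\V)\le b/(1-\beta)$, which produces the constant in \eqref{eq:dTV-rosenthal}.

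The main obstacle is the supermartingale step on $C$. There the non-coupled move uses the residual kernel $R$, not $Q$, and the naive bound is $\ee_{R\times R}[h]\le\Theta/(1-\epsilon)$. The extra factor $1/(1-\epsilon)$ does not appear in $\Theta$ as defined in \eqref{eq:thetas}. I would first try to handle it by working with $\ee[h(X_{n+1},X_{n+1}')\1_{\{T>n+1\}}\mid\mathcal F_n]$ directly. On $C$ the no-coupling event has probability $1-\epsilon$, so this conditional expectation is $(1-\epsilon)\ee_{R\times R}[h]\le\Theta$ (using $h\ge0$), and the factor cancels. If that bookkeeping fails, the fallback is to carry $\Theta/(1-\epsilon)$ through the argument, which is precisely the combination that appears in the choice of $r$ in \eqref{eq:r} in Theorem~\ref{thm:unbounded-state-rate}.
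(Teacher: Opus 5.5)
Your coupling argument is correct. It is essentially Rosenthal's own proof of the cited theorem, which the paper imports without reproving. Your first route on $A\times A$ already closes the residual-kernel step: $h$ is additive and $(1-\epsilon)R\,\V=Q\V-\epsilon\,\eta(\V)$ holds coordinatewise, so $(1-\epsilon)\,\ee_{R\times R}[h]=(1-\epsilon)+Q\V(x)+Q\V(y)-2\epsilon\,\eta(\V)\le\Theta$. The fallback is therefore unnecessary, and it would also not give the stated bound: the combination $\Theta/(1-\epsilon)$ in \eqref{eq:r} comes from balancing the two terms, not from the residual kernel.

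Only minor repairs remain.
\begin{itemize}
\item Replace the ``$\approx$'' step by using $N_n\le j-1<rn$ together with $\Theta/\theta>1$.
\item Obtain $\pi(\V)\le b/(1-\beta)$ by applying the iteration to the truncation $\V\wedge M$ and then letting $M\to\infty$, since a priori $\pi(\V)$ could be infinite.
\item Present $n_0=1$ as a necessary restriction, not a harmless reduction: with a two-step minorization the stated bound can fail.
\end{itemize}
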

%%%%%%%%%%%%%%%%%%%%%%%%%%%%%%%%%%%%%%%%%%%%%%%%%%%%%%%%%%%
%%%%%%%%%%%%%%%%%%%%%%%%%%%%%%%%%%%%%%%%%%%%%%%%%%%%%%%%%%%%
\subsection{Bounds on the convergence rates for bounded state space\label{sec:bounds-bounded}}
%%%%%%%%%%%%%%%%%%%%%%%%%%%%%%%%%%%%%%%%%%
In this section we present a proof of Theorem \ref{thm:bounded-state-rate} 
for a semistochastic process with a bounded state space.  
%%%%%%%%%%%%%%%%%%%%%%%%%%%%%%%%%%%%%%%%%%
To discretize the continuous-time process $\{X_t\}$, 
we fix a value $\Delta t > 0$ and define the Markov transition kernel $Q$ 
of the discretization $\{X_{n\,\Delta t}\}$ via 
$Q := \uu^{\Delta t}$.  
%%%%%%%%%%%%%%%%%%%%%%%%%%%%%%%%%%%%%%%%%%
To establish a uniform minorization, we first note that, for any 
nonnegative observable $f$, we can apply Lemma~\ref{lemma:u-ineq} to conclude that 
\[
[Q \, f](x) 
\geq 
\int_0^{\Delta t} \rmd s \, S(\Delta t - s, x) \, \jrt 
\left(\phi^{\Delta t-s}(x)\right) 
\int_0 ^1 \rmd \alpha \, \rho(x,\alpha) \, S(s, \alpha \phi^s( x)) \, f(\alpha \phi^s( x)) \ .
\]
Using the assumption that $0 < \lambda_* \jrt(x) \leq \lambda^{*} $, we have 
\[
S(t, x) \ \geq \ \exp(-\lambda^{*} t) \qquad \mbox{for all } x \in [0, k) \ .
\]
Combining these inequalities with the bounds on $\rho(x,\alpha)$ and $\Lambda(x)$ 
assumed in Theorem~\ref{thm:bounded-state-rate}, we arrive at
\[
[Q \, f](x) \geq \rho_{*}  \, \lambda_* \, \exp (-\lambda^{*} \Delta t) \,  \int_0^{\Delta t} \rmd s \, 
\int_0 ^1 \rmd \alpha \,  f(\alpha \, \phi^s(x)) \ .
\]
Changing the variable $\alpha$ to $z  =  \alpha \, \phi^s (x)$ 
and interchanging the order of integration, we have 
\begin{align*}
[Q \, f](x) \  & \geq  \ \rho_{*}  \, \lambda_* \exp (-\lambda^{*}  \Delta t) \,  \int_0^{\Delta t} \rmd s \, 
\int_0 ^1 \rmd \alpha  \, f(\alpha \phi^s(x)) \\
&= 
\ \rho_{*}  \lambda_*  \exp (-\lambda^{*} \Delta t) \int_0^{\Delta t} \rmd s \,  
\left( \phi^s(x) \right)^{-1}\int_ 0^{\phi^s(x)} \rmd z \,  f(z)\\
&\geq  
\ \rho_{*}  \lambda_*  \exp (-\lambda^{*}  \Delta t)   \int_0^{\Delta t} \rmd s \,  k^{-1}
\int_ 0^{\phi^ {s} (0)} \rmd z \,  f(z) \\
&= 
\ \dfrac{\rho_{*} \, \lambda_* }{k} \,  \exp (-\lambda^{*} \, \Delta t) \int_ 0^{\phi^ {\Delta t} (0)} \rmd z \,  f(z) \int_{\psi(0, z)}^{\Delta t} \rmd s \\ 
&= 
\ \dfrac{\rho_{*}  \lambda_* }{k}   \exp (-\lambda^{*} \Delta t) 
	\int_ 0^{\phi^ {\Delta t} (0)} f(z) \, [\Delta t - \psi(0, z)] \,  \rmd z \ , 
%%%%%%%%%%%%%%%%%%%%%%%%%%%%%%%%%%%%%%%%%%
\end{align*}
where have made use of the monotonicity of $\phi^t(x)$, 
the boundedness of the state space $\state = [0,k]$ (for finite~$k$), 
and the fact that $\vv(0) >0$ to arrive 
at a uniform in $x \in \state$ positive lower bound for $[Q f](x)$.  
Multiplying and dividing by $\Phi$ \eqref{eq:Phi}, 
we obtain the uniform minorization \eqref{eq:minor-1} 
with $\epsilon = \epsilon_{\Delta t}$~\eqref{eq:epsilon-delta-t} 
and minorizing measure $\eta$ \eqref{eq:minor-1} whose density is 
\[ %\begin{equation}  \label{eq:rad-nik}
\frac {\rmd \eta}{\rmd z} 
= 
\frac{\Delta t - \psi(0, z)}{\Phi} \,  \1\{0  \leq   z  \leq  \phi^{\Delta t}(0) \} \ . 
\] %\end{equation}
Figure \ref{fig:minor-domain} illustrates how the support of the minorizing 
measure is constructed and elucidates its meaning.  
Namely, for any initial value $x \in \state$, 
there is a nonzero probability that in the time interval $[0,\Delta t]$, 
a disturbance will bring the process under the the trajectory of 0 
(i.e., in the shaded region).  Once it is in the shaded region, 
the process can never leave it in the time interval $[0,\Delta t]$.  
The minorizing measure $\eta$ is due to this accumulation of probability 
in the support $[0,\phi^{\Delta t}(0)]$ of~$\eta$.  
\begin{figure}[h] 
\centerline{
\includegraphics[scale=.65]{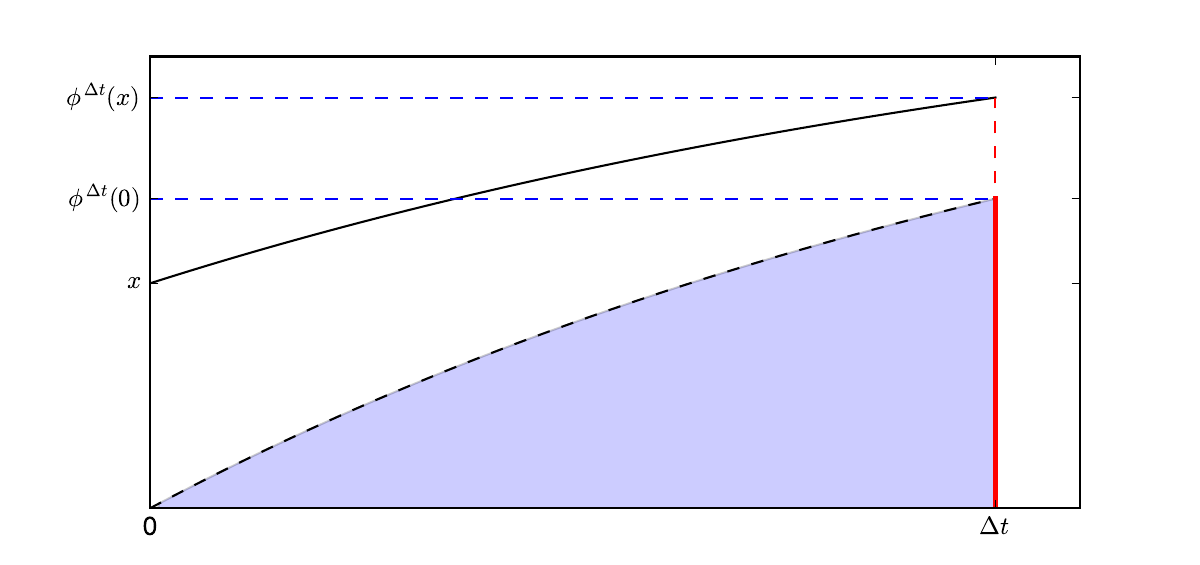}
}
\caption{On the construction of the minorizing measure in Theorem \ref{thm:bounded-state-rate}.
}
\label{fig:minor-domain}
\end{figure}
Combining the uniform minorization with Theorem~\ref{thm:uniform-minorization} 
and Lemma~\ref{lemma:continuous-to-discrete} completes the proof 
of~\ref{thm:bounded-state-rate}.  
%%%%%%%%%%%%%%%%%%%%%%%%%%%%%%%%%%%%%%%%%%%%%%%%%%%%%%%%%%%%%%%%%%%%%%%%%%%%%	
\subsection{Bounds on the convergence rates for unbounded state space\label{sec:bounds-unbounded}}
%%%%%%%%%%%%%%%%%%%%%%%%%%%%%%%%%%%%%%%%%%
As in the proof of Theorem~\ref{thm:bounded-state-rate}, 
we start the proof of Theorem~\ref{thm:unbounded-state-rate} 
by fixing a value of $\Delta t > 0$ 
and letting $Q  \ = \ \uu^{\Delta t}$ 
be the transition kernel for the corresponding discretization.  

Due to the unbounded nature of the state space in Theorem~\ref{thm:unbounded-state-rate}, 
we start by establishing a drift condition, 
i.e., an upper bound on $\ee\left[ \V( X_{\Delta t}) | X_0 = x\right]$ 
of the form~\eqref{eq:drift-condition}, for the specific choice $V(x)=I(x)$, where $I$ the identity map $I(x) = x$.  
To obtain an upper bound on $\ee[I(X_{\Delta t})|X_0]$, 
we compute $[\gen I](x)$ from~\eqref{eq:gen}: 
\begin{equation}  \label{eq:gen-1}
[\gen I](x) 
=
\vv(x)  +  \jrt(x) \int_0^1 \rho(x,\alpha)[  \alpha x   -  x ] \, \rmd \alpha 
=
\vv(x)  - \jrt(x)  \zeta(x)  x \ ,
\end{equation}
where $\zeta$ is defined by~\eqref{eq:zeta}.  
From the conditions on $\vv$ \eqref{eq:vv} and $\jrt$, 
we thus have 
\begin{equation}\label{eq:drift-ineq}
[\gen I](x) \ \leq \ \vv^* - \lambda_* \zeta(x) x \qquad 
\mbox{for all } x \in \state \ .
\end{equation}
Recall that, for any observable~$f$, the quantity 
\[
M_t := f(X_t)-f(X_0)-\int_0^t  [\gen f](X_s) \, \rmd s \ ,
\]
is a martingale.  Applying this for $f=I$, we obtain 
that, for any $t \geq  0$, 
\begin{equation}  \label{eq:EM}
\ee[M_t] 
= 
\ee \left[  \left. X_{ t} - X_0  - \int_{0}^{ t} [\gen I](X_{s}) \, \rmd s \ \right| X_0 =x\right] 
= 0 \ .
\end{equation}
Setting $u( t) = \ee[I(X_{ t})|X_0=x] =\ee[X_{ t} | X_0=x] $ 
and writing $[\gen I](X_{s})$ explicitly from \eqref{eq:gen-1}, 
we can rewrite \eqref{eq:EM} as an integral equation
\begin{equation}\label{eq:integral-drift}
u(t) 
= 
u(0) 
+ 
\int_0^t \ee \left[ \left. v(X_s)- \jrt(X_s)\zeta(X_s) X_s \right|   X_0=x \right] \, \rmd s \ .
\end{equation}
The sample paths are right-continuous, thus the right hand side of 
\eqref{eq:integral-drift} can be differentiated with respect to~$t$.  
Differentiating \eqref{eq:integral-drift} and referencing \eqref{eq:drift-ineq}, we have 
\[
u'(t) 
=
\ee \left[ \left. \vv(X_t)- \jrt(X_t)\zeta(X_t) X_t \right| X_0=x \right] 
\leq 
\vv^* - \lambda_* \zeta_* u(t) \ .
\]
Rearranging this inequality and multiplying by the integrating factor $e^{\lambda_* \zeta_* t}$ gives
\[
\frac{\rmd}{\rmd t} \left( e^{\lambda_* \zeta_* t} u(t) \right) \ \leq \ \vv^* e^{\lambda_* \zeta_* t} \ ,
\]
or, equivalently, 
\[
\frac{\rmd}{\rmd t} \left( e^{\lambda_* \zeta_* t} u(t) - \frac{\vv^*e^{\lambda_* \zeta_* t}}{\lambda_* \zeta_*}  \right)  \ \leq \ 0 \ .
\]
Therefore the expression in the parentheses is decreasing with $t$, 
so it must obtain its maximum on $[0, \infty)$ at $t=0$; recalling that $u(0)=x$, we have
\[
e^{\lambda_* \zeta_* t} u(t) - \frac{\vv^*}{\lambda_* \zeta_*} e^{\lambda_* \zeta_* t} 
\leq 
\left. 
\left( 
e^{\lambda_* \zeta_* t} u(t) - \frac{\vv^*}{\lambda_* \zeta_*} e^{\lambda_* \zeta_* t} 
\right) 
\right|_{t=0} 
= 
x - \frac{\vv^*}{\lambda_* \zeta_*} \ .
\]
Solving for $u(t)$ and setting $t = \Delta t$ produces 
the desired drift condition for the discretized process $\{X_{n\, \Delta t}\}$, 
\begin{equation}  \label{eq:drift-1}
\ee[X_{\Delta t} | X_0=x] 
\leq
e^{-\lambda_* \zeta_* \Delta t} \, x + \frac{\vv^{*}}{\lambda_* \zeta_*} 
\left( 1 \ - \ e^{-\lambda_* \zeta_* \Delta t} \right) \ ,
\end{equation}
as in~\eqref{eq:drift-condition} 
with $V=I$, $\beta = e^{-\lambda_* \zeta_* \Delta t}$ and 
$b = \frac{\vv^{*}}{\lambda_* \zeta_*} 
	\left( 1 \ - \ e^{-\lambda_* \zeta_* \Delta t} \right)$.  

Having established the drift condition, we can minorize $Q$ 
on $[0, \kappa]$ for any $\kappa < \infty$ by using the same argument 
as in the proof of Theorem~\ref{thm:bounded-state-rate}.  
In order to be able to apply Theorem~\ref{thm:drift-minorization}, 
we additionally require that $\kappa$ satisfy~\eqref{eq:cond-kappa}.  
To complete the proof of Theorem~\ref{thm:unbounded-state-rate}, 
we choose the value of $r$ in such a way that the two terms 
in the right-hand side of~\eqref{eq:dTV-rosenthal} balance each other, 
which for large~$n$ gives us 
$\left( 1-\epsilon \right)^r = \theta^{1-r} \Theta^r$, 
which gives the expression \eqref{eq:r} for~$r$.  
In particular, with this choice of $r$, 
\[
(1-\epsilon)^{n r} +  
\left( \theta^{1-r} \Theta^{r}\right)^{n} \left(1+ \frac{b}{1-\beta} 
	+ \ee_{\mu_0}[X_0] \right) 
= 
\left(2+ \frac{b}{1-\beta} + \ee_{\mu_0}[X_0] \right) (1-\epsilon)^{n r} 
\]
for all~$n$.  
Combining this with Lemma~\ref{lemma:continuous-to-discrete} 
and Theorem~\ref{thm:drift-minorization} 
completes the proof of Theorem~\ref{thm:unbounded-state-rate}.  
%%%%%%%%%%%%%%%%%%%%%%%%%%%%%%%%%%%%%%%%%%%%%%%%%%%%%%%%%%%%%%%%%%%%%%%
%%%%%%%%%%%%%%%%%%%%%%%%%%%%%%%%%%%%%%%%%%%%%%%%%%%%%%%%%%%%%%%%%%%%%%%
%%%%%%%%%%%%%%%%%%%%%%%%%%%%%%%%%%
%%%%%%%%%%%%%%%%%%%%%%%%%%%%%%%%%%
%%%%%%%%%%%%%%%%%%%%%%%%%%%%%%%%%%%%%%%%%
%%%%%%%%%%%%%%%%%%%%%%%%%%%%%%%%%%%%%%%%%
\section{\label{sec:examples}Examples}
In this section we illustrate our results on two examples.  
In both cases we assume that the jump rate $\Lambda(x)$ has a constant value~$\lambda$, 
and that the severity of disturbances is uniformly distributed, i.e., $\rho(x,\alpha) = 1$.  
We also demonstrate how one can optimize the relevant parameters 
$\Delta t$ and $\kappa$ in order to obtain tighter bound on rates of convergence. 
%%%%%%%%%%%%%%%%%%%%%%%%%%%%%%%%%%%%%%%%%%
\subsection{Example: bounded state space\label{sec:example1}}
%%%%%%%%%%%%%%%%%%%%%%%%%%%%%%%%%%%%%%%%%%
In this example we consider a model of growth with saturation 
on $\state = [0, k]$: 
\[
x'(t) = k - x \ , \qquad k=\mathrm{const}>0 \ .
\]
In this case (cf.~\eqref{eq:phi-psi}), 
\[
\phi^t(x) = k  +  (x - k)e^{-t} \ , \qquad 
\psi(x_0 , x) =  \ln \frac{k  -  x_0}{k  -  x} \ .
\] 
From Theorem \ref{thm:bounded-state-rate}, for fixed  $\Delta t$ 
and arbitrary initial distribution $\mu_0$, the following bound holds
\[
d_{\mathrm{TV}}\left( \mu_0 \, \uu^t , \pi \right) 
\leq 
\left(1-\epsilon_{\Delta t}\right)^{\lfloor t /\Delta t \rfloor} 
\]
($\pi$ is the unique stationary distribution).  
We have 
\[
\Phi 
= 
\int_ 0^{k (1-e^{-\Delta t})} \left(\Delta t - \ln  \frac{k}{k \, - \, z} \right) \, \rmd z
=
k( \Delta t \, + \, e^{-\Delta t} - 1   ) \ , 
\]
\[
\epsilon_{\Delta t} 
= 
\frac{ \Phi \, \lambda\,  e^{-\lambda\, \Delta t} } { k }   
= 
\lambda\, e^{-\lambda\, \Delta t}  ( \Delta t \, + \, e^{-\Delta t} - 1   ) \ .
\]
For convergence rates, the quantity of interest is 
$(1 - \epsilon_{\Delta t})^{1/\Delta t}$ (cf.~\eqref{eq:dTV-bounded}).  
For concreteness, take $\lambda=1$.  
In Figure~\ref{fig:rate1}, we plot 
$(1 - \epsilon_{\Delta t})^{1/\Delta t}$ as a function of $\Delta t$ 
and observe that it exhibits a minimum at $\Delta t \approx 0.82$, 
for which $\epsilon_{\Delta t} \approx 0.115$.  
The intuitive reason for existence of such an optimal value of $\Delta t$ 
was discussed in Remark~\ref{rem:choices-bounded}.  
\begin{figure}[h] 
\centerline{
\includegraphics[scale=.65]{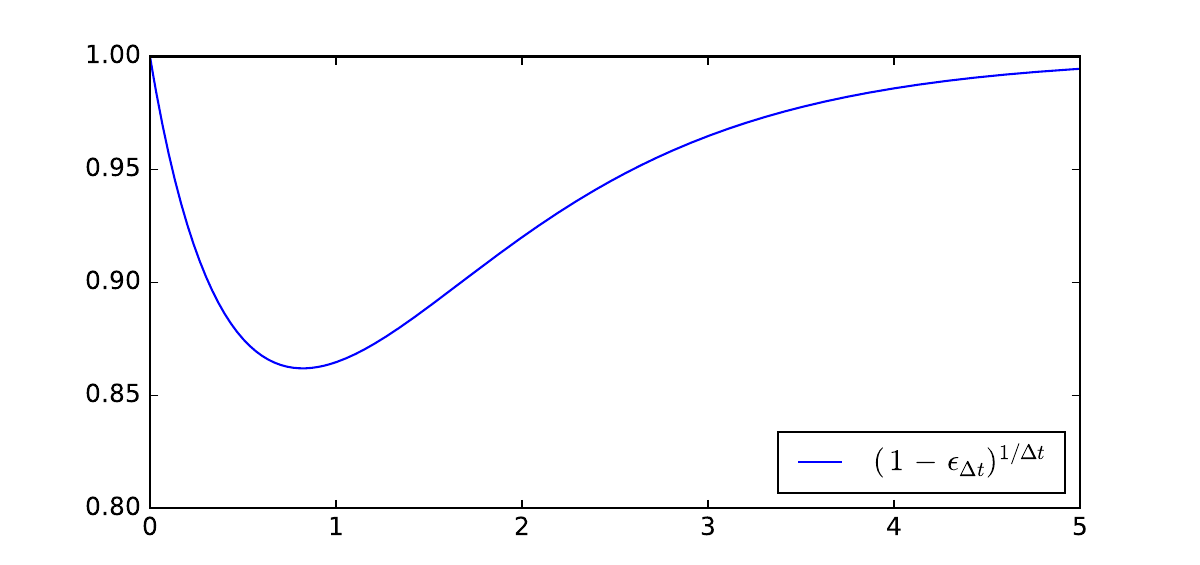}
}
\caption{Plot of $(1 - \epsilon_{\Delta t})^{1/\Delta t}$ vs.\ $\Delta t$.
}
\label{fig:rate1}
\end{figure}
Choosing $\Delta t =0.82$, 
we obtain that, for any initial distribution $\mu_0$, 
\[
d_{\mathrm{TV}}( \mu_t , \pi ) \leq ( 1 - 0.115 )^{\lfloor t/0.82 \rfloor} 
\leq 
1.13 \, e^{ - 0.148 \, t} \ .
\]
For comparison, in Figure~\ref{fig:rate2} 
we plot $(1 - \epsilon_{\Delta t})^{\lfloor t/\Delta t \rfloor}$ 
as a function of $t$ for several values of $\Delta t$.  
\begin{figure}[h] 
\centerline{
\includegraphics[scale=.65]{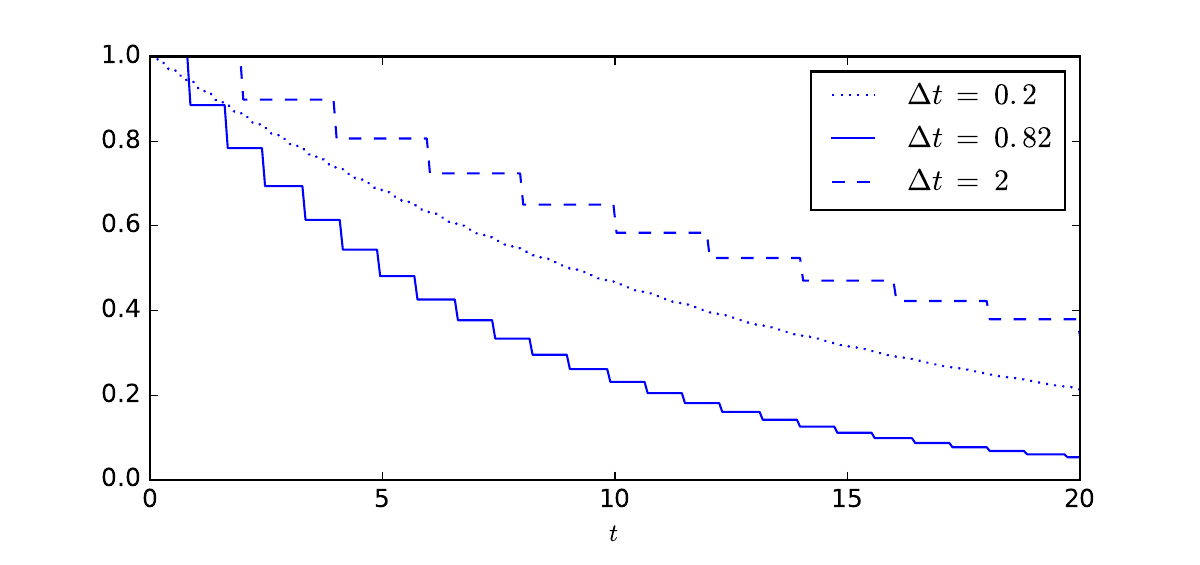}
}
\caption{Plots of $(1 - \epsilon_{\Delta t})^{\lfloor t/\Delta t \rfloor}$ vs.\ $t$ 
for selected values of $\Delta t$.
}
\label{fig:rate2}
\end{figure}
%%%%%%%%%%%%%%%%%%%%%%%%%%%%%%%%%%%%%%%%%%%%
%%%%%%%%%%%%%%%%%%%%%%%%%%%%%%%%%%%%%%%%%%%%
\subsection{Example: unbounded state space\label{sec:example-unbounded}}
%%%%%%%%%%%%%%%%%%%%%%%%%%%%%%%%%%%%%%%%%%
Consider the case of constant growth rate on $\state = [0,\infty)$: 
\[
x'(t) = v = \mathrm{const} > 0 \ .
\]
Our flow and time-duration functions are
\[
\phi^t(x) = x  +  t \ , \qquad 
\psi(x_0 , x)  = x - x_0 \ .
\] 
From Theorem \ref{thm:unbounded-state-rate}, for fixed  $\Delta t> 0$, 
we can first establish a drift condition using the identity as our drift function.  
In this case the average fractional loss $\zeta(x)= \frac{1}{2}$ 
does not depend on $x$, so we can compute the expectation exactly,
\[
\ee[X_{\Delta t} | X_0 = x] = e^{-\lambda \, \Delta t / 2} 
+ \frac{2 v} {\lambda}\left( 1 -  e^{-\lambda \, \Delta t / 2} \right) \ , 
\]
so the drift parameters are $\beta = e^{-\lambda \, \Delta t / 2}$, 
$b = \frac{2 v} {\lambda}\left(  1  -  e^{-\lambda \, \Delta t / 2}  \right)$.  
To compute bounds on the convergence rates, we need to select a value 
$\kappa \ > \  \frac{2b}{1 - \beta} \ = \frac{4v}{\lambda}$ 
for which we minorize the process on $[0, \kappa]$.  
We easily obtain 
$\Phi = v ( \Delta t ) ^2$ and 
$\epsilon_{\Delta t, \kappa} = \frac{v (\Delta t)^2 \, \lambda \, e^{-\lambda \Delta t }}{\kappa}$, 
where we are emphasizing the dependence of $\epsilon$ on $\kappa$ as well as $\Delta t$. 
For $\theta$ and $\Theta$ \eqref{eq:thetas} we obtain 
\[
\theta = \frac{  1 + \frac{4v}{\lambda} + \left(\kappa - \frac{4v}{\lambda}\right) e^{-\lambda \Delta t /2}  }{1+\kappa} \ ,
\qquad
 \Theta = 1 + \frac{4v}{\lambda} + \left(2\kappa - \frac{4v}{\lambda}\right)  e^{-\lambda \Delta t /2} \ ;
\]
in the expression for $\theta$, note that the restriction on $\kappa$ ensures the positivity 
of the exponential term in the numerator.
Continuing the example with   
the specific values $v = 1$ and $\lambda = 2$, 
we obtain $\beta \approx 0.405$ and $b \approx 0.595$.  
We can then make appropriate choices for $\Delta t $ and $\kappa$ by minimizing the expression
\[
\left( 1-\epsilon_{\Delta t, \kappa} \right)^{\frac{r(\Delta t, \kappa)}{\Delta t}}
\]
as illustrated in Figures~\ref{fig:rate5} and~\ref{fig:rate6}.  
The dependance of this expression on $\Delta t$ and $\kappa$ is in accordance with 
our reasoning in Remarks \ref{rem:choices-bounded} and~\ref{rem:choices}.  
%%%%%%%%%%%%%%%%%%%%%%%%%%%%%%%%%%%%%%%%%%
\begin{figure}[h] 
\centerline{
\includegraphics[scale=.65]{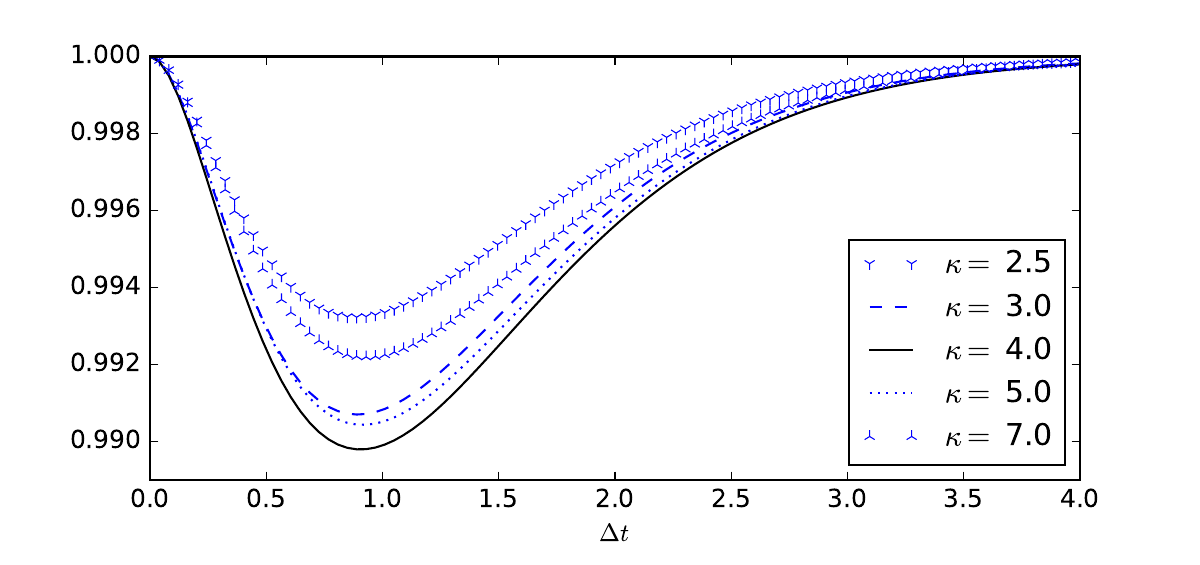}
}
\caption{Plots of $(1 - \epsilon_{\Delta t,\kappa})^{r / \Delta t }$ vs.\ $\Delta t$ for selected $\kappa$.
}
\label{fig:rate5}
\end{figure}

\begin{figure}[h] 
\centerline{
\includegraphics[scale=.65]{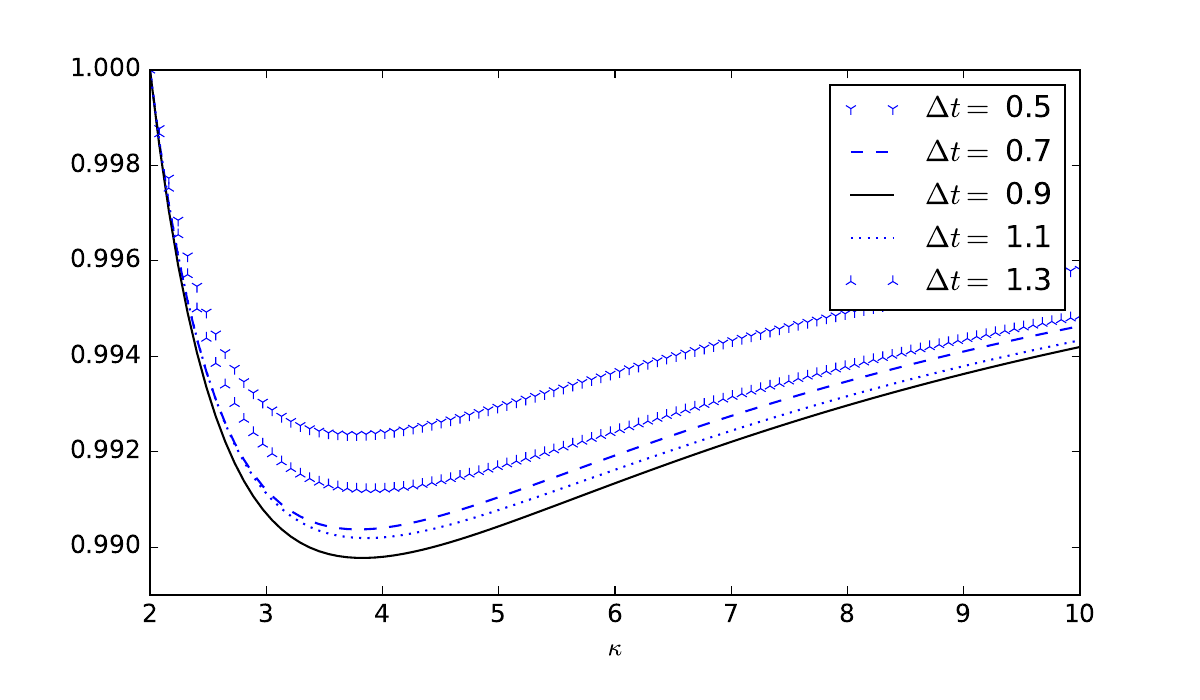}
}
\caption{Plots of $(1 - \epsilon_{\Delta t,\kappa})^{r / \Delta t }$ vs.\ $\kappa$ for selected $\Delta t$.  
}
\label{fig:rate6}
\end{figure} 
Consequently, we choose $\Delta t = 0.904$, $\kappa = 3.83$, and $r$ as in \eqref{eq:r} to obtain
\begin{align*}
d_{\mathrm{TV}}\left( \mu_t , \pi \right) \ &\leq \ C (1 - 0.070)^{ r\lfloor t / 0.904\rfloor} \\
& \leq
\ 1.02 \, C \, e^{-0.014 \, t}
\ ,
\end{align*}
with $C = 3 + \ee_{\mu_0} [X_0]$. 
%%%%%%%%%%%%%%%%%%%%%%%%%%%%%%%%%%%%%%%%%
%%%%%%%%%%%%%%%%%%%%%%%%%%%%%%%%%%%%%%%%%
\bibstyle{plain}

\end{document}